\newtheorem{thm}{Theorem}[section]
\newtheorem{prop}[thm]{Proposition}
\newtheorem{lem}[thm]{Lemma}
\newtheorem{cor}[thm]{Corollary}
\newtheorem{obs}[thm]{Observation}
\tikzstyle{vertex}=[ circle, draw=black,fill=black, inner sep=0pt, minimum size=2mm]
\tikzstyle{bluevertex}=[ circle, draw=blue,fill=blue, inner sep=0pt, minimum size=2mm]
\tikzstyle{redvertex}=[ circle, draw=red,fill=red, inner sep=0pt, minimum size=2mm]
\tikzstyle{token}=[ rectangle, draw=red,fill=none, inner sep=0pt, minimum size=3mm]
\date{(date1), and in revised form (date2).}
\subjclass[2010]{(MSC 05C70, 05C69)}
\keywords{broadcast independence, covering, packing}
\thanks{The first author thanks the NSERC (Canada) Discovery Grant program.  The second author conducted this work at Thompson Rivers University and thanks the NSERC USRA program.}
\title[Broadcast Independence and Packing in Trees]{Broadcast independence and packing in certain classes of trees}
\date{}
\author{Richard C.\ Brewster}
\address{Department of Mathematics and Statistics, Thompson Rivers University, Kamloops, BC, Canada}
\email{rbrewster@tru.ca}
\author{Kiara A. McDonald}
\address{Department of Mathematics and Statistics, University ov Victoria, Victoria, BC, Canada}
\email{kiaramcdonald@uvic.ca}
\begin{document}

\begin{abstract}
Given a graph $G=(V,E)$ of diameter $d$, a \emph{broadcast} is a function $f:V(G) \to \{ 0, 1, \dots, d \}$ where $f(v)$ is at most the eccentricity of $v$. A vertex $v$ is \emph{broadcasting} if $f(v)>0$ and a vertex $u$ \emph{hears} $v$ if $d(u,v) \leq f(v)$.  A broadcast is \emph{independent} if no broadcasting vertex hears another vertex and is a \emph{packing} if no vertex hears more than one vertex. The \emph{weight} of $f$ is $\sum_{v \in V} f(v)$. We find the maximum weight independent and packing broadcasts for perfect $k$-ary trees, spiders, and double spiders as a partial answer to a question posed by Ahmane et al.
\end{abstract}

\maketitle
\section{Introduction}

Let $G = (V, E)$ be a finite, simple undirected graph. We let
$\mathrm{diam}(G)$ denote the \emph{diameter} of $G$ (the maximum distance between two vertices in $G$) and $\mathrm{ecc}_G(v)$ denote the \emph{eccentricity} of a vertex $v \in V(G)$ (the maximum distance of a vertex from $v$). A \emph{broadcast} on $G$ is a function $f: V(G) \rightarrow \{0, 1, \dots, \mathrm{diam}(G)\}$, where $f(v) \leq \mathrm{ecc}_G(v)$ for every vertex $v \in V(G)$. The \emph{cost} or \emph{weight} of a broadcast $f$ is defined as $f(V) := \sum_{v \in V(G)}{f(v)}$. A vertex $v$ is \emph{broadcasting} if $f(v) > 0$ and $V_f^+$ is the set of broadcasting vertices in $f$. A vertex $u$ \emph{hears} the broadcast if $d(u,v) \leq f(v)$ for some $v \in V_f^+$. (Clearly, a broadcasting vertex hears itself.) The set of vertices which $u$ hears is denoted by $H(u) = \{v \in V_f^+| d(u,v) \leq f(v) \}$. A vertex $u$ is \emph{f-dominated}, or simply \emph{dominated} when $f$ is clear from context, if $|H(u)| \geq 1$.
A broadcast $f$ is \emph{dominating} if every vertex in $G$ is $f$-dominated.  That is, $\bigcup_{v \in V_f^+} N_{f(v)}[v] = V(G)$ where $N_{k}[v] = \{ u | d(u,v) \leq k \}$ is the \emph{ball of radius $k$ centred at $v$}.  Note that $N_{f(v)}[v]$ is precisely the set of vertices that hear $v$.

A broadcast $f$ is \emph{independent} if no broadcast vertex hears another vertex; that is, for every $v \in V_f^+$, $|H(v)| = 1$. The \emph{broadcast independence number} $\alpha_b(G)$ of $G$ is the maximum weight of an independent broadcast on $G$. We note this definition was introduced in \cite{Dunbar2006, Erwin2001}. A more refined study of broadcast independence is in~\cite{Mynhardt2021, Neilson2019} where this is called \emph{hearing-independence}.  Note the value of $\alpha_b(G)$ for a graph $G$ consisting of a single vertex is $0$ as the diameter of $G$ is $0$, although one might argue is natural to broadcast with power $1$ in this special case.  To avoid this issue we will restrict our attention to connected graphs on at least $2$ vertices for the remainder of the paper.

If we require the stronger condition that each vertex (broadcasting or not) can hear at most one vertex, then we arrive at the following notion. A broadcast $f$ is a \emph{packing broadcast} if for every $u \in V(G)$, $|H(u)| \leq 1$. The maximum weight of a packing broadcast on $G$ is the \emph{broadcast packing number}, denoted by $P_b(G)$.

Certain broadcast parameters have been calculated for special classes of graphs~\cite{Ahmane2018, Lobsters2024, Bouchemakh2014, Bouchouika2020, Circulant2024}.  Ahmane et al.~\cite{Ahmane2018} examine broadcast independence in caterpillars and pose the question ``Can we determine the broadcast independence number of other subclasses of trees? In particular, what about $k$-ary trees?''. In this paper, we answer this question for spiders and perfect $k$-ary trees.

In Section~\ref{sec:indeptrees}, we determine an explicit formula for the broadcast independence number of perfect $k$-ary trees. In Section~\ref{sec:spiders}, we determine $\alpha_b$ for spiders.  Finally, in Section~\ref{packing}, we determine optimal broadcast packings and their duals which we call \emph{multicovers} for perfect $k$-ary trees, spiders, caterpillars, and double spiders.  The equality of the two dual parameters is immediate from~\cite{Farber1984, Lubiw1987}, and so our contribution is to provide a structural description of the primal and dual solutions.

In general we follow the notation of~\cite{west}.

\section{Broadcast independence in perfect $k$-ary trees}\label{sec:indeptrees}

A \emph{perfect $k$-ary tree} is a rooted tree in which every vertex has exactly $k$ children and all leaves are at the same height. A \emph{perfect binary tree} is a $k$-ary tree such that $k=2$. In this paper, our results differ for the cases $k=2$ and $k \geq 3$.  Therefore we adopt the convention that \emph{perfect $k$-ary tree} is used when $k \geq 3$ (unless explicitly stated otherwise) and \emph{perfect binary tree} is used when $k=2$. A perfect $k$-ary tree of height $h$ is denoted $T_h^k$, and a perfect binary tree of height $h$ is denoted $T_h$. In this section we give exact values for the broadcast independence number of perfect binary trees and $k$-ary trees.

We begin with some basic results extending ideas from~\cite{Bouchemakh2014}.  Given a broadcast $f$ on $G$, we denote the restriction of $f$ to a subgraph $H$ by $f\vert_H$.  Recall, a subgraph $H$ is \emph{isometric} in $G$ if for all $x, y \in V(H)$, $d_H(x,y) = d_G(x,y)$. Clearly, an isometric subgraph must be induced.

\begin{obs}\label{obs:isometric}
Let $f$ be an independent broadcast on a graph $G$, and let $H$ be an isometric subgraph of $G$. If $\lvert{V^+_{f\vert_H}}\rvert \geq 2$, then $f\vert_H$ is an independent broadcast on $H$.
\end{obs}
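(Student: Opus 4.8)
The plan is to unpack the definition of independent broadcast for the restriction $f\vert_H$ and verify the required condition directly. Recall that $f\vert_H$ is independent on $H$ exactly when, for every $v \in V^+_{f\vert_H}$, we have $H_H(v) = \{v\}$, where $H_H(v) = \{u \in V^+_{f\vert_H} : d_H(u,v) \le (f\vert_H)(v)\}$ is computed using distances \emph{within} $H$. So I would fix an arbitrary broadcasting vertex $v$ of $f\vert_H$ and show that no other broadcasting vertex $u$ of $f\vert_H$ lies within distance $(f\vert_H)(v) = f(v)$ of $v$ in $H$.

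The key step is to transfer independence from $G$ down to $H$, and the engine for this is the isometric hypothesis. First I would observe that $V^+_{f\vert_H} \subseteq V^+_f$: a vertex broadcasting in the restriction is, in particular, broadcasting in $G$ with the same power, since $f\vert_H$ assigns it $f(v)$. Now suppose toward a contradiction that some $v \in V^+_{f\vert_H}$ hears a distinct $u \in V^+_{f\vert_H}$ in $H$, i.e.\ $d_H(u,v) \le f(v)$ with $u \ne v$. Because $H$ is isometric, $d_G(u,v) = d_H(u,v) \le f(v)$, so $v$ hears $u$ in $G$ as well. But $u, v \in V^+_f$ are both broadcasting vertices of the original broadcast $f$, so this contradicts the independence of $f$ on $G$, which forces $|H(v)| = 1$ for every broadcasting vertex. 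Hence no such $u$ exists, and $H_H(v) = \{v\}$ for every $v \in V^+_{f\vert_H}$.

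There is one technical caveat to dispatch before the display of independence is meaningful, and this is where the hypothesis $|V^+_{f\vert_H}| \ge 2$ enters. The definition of a broadcast on $H$ requires $(f\vert_H)(v) \le \mathrm{ecc}_H(v)$, and one should confirm the restriction is a legitimate broadcast on $H$ at all. The subtle point is that $\mathrm{ecc}_H(v)$ may be smaller than $\mathrm{ecc}_G(v)$, so a value $f(v)$ admissible in $G$ need not satisfy $f(v) \le \mathrm{ecc}_H(v)$; however, since there are at least two broadcasting vertices, for any broadcasting $v$ there is a second broadcasting vertex $w \ne v$ in $H$, and independence in $G$ combined with isometry gives $f(v) < d_G(v,w) = d_H(v,w) \le \mathrm{ecc}_H(v)$, so the power is capped correctly and $f\vert_H$ is a valid broadcast. (If there were only one broadcasting vertex, it could overshoot the diameter of $H$ and the restriction might fail to be a broadcast, which is exactly why the hypothesis is needed.)

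I expect the main obstacle to be not the independence-transfer argument, which is a one-line consequence of isometry, but rather the bookkeeping around this validity check: one must be careful to use the second broadcasting vertex to bound the eccentricity in $H$, and to note that the eccentricity bound, not just pairwise independence, is what the broadcast definition demands. Once that is handled, the contradiction argument closes immediately.
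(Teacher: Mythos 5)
Your proposal is correct and follows essentially the same route as the paper: both use the isometry to equate $d_H(x,y)$ with $d_G(x,y)$, and both use the existence of a second broadcasting vertex to bound $f(v)$ by $\mathrm{ecc}_H(v)$, thereby verifying validity and independence of $f\vert_H$ simultaneously. The paper compresses both checks into the single chain $f(x) < d_G(x,y) = d_H(x,y) \leq \mathrm{ecc}_H(x)$, which you have merely unpacked into two explicit steps.
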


\begin{proof}
Let $x$ and $y$ be distinct broadcast vertices in $H$.  Then 
\[
f(x) < d_G(x,y) = d_H(x,y) \leq e_H(x)
\]
implying $y$ does not hear $x$, and $f$ is a well-defined broadcast on $H$. The result follows.
\end{proof}

The following result is the key tool allowing us to use recursion on subtrees.

\begin{lem}\label{lem:isocollection}
Let $f$ be an independent broadcast on a graph G. Let $H_1, H_2, \dots, H_k$ be a collection of pairwise vertex disjoint, isometric subgraphs of $G$ such that $\lvert{V^+_{f\vert_{H_i}}}\rvert \geq 2$ for each $i$.
Then 
\[
f(V(G)) \leq \sum_{i=1}^{k}{\alpha_{b}(H_i)} + \sum_{v \in V(G)\backslash V(H)} f(v).
\]
\end{lem}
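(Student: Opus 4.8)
The plan is to reduce the global weight bound to a sum of local bounds, one per subgraph, using Observation~\ref{obs:isometric} as the bridge. First I would fix an index $i$ and observe that the hypothesis $\lvert V^+_{f\vert_{H_i}}\rvert \geq 2$ is exactly the condition Observation~\ref{obs:isometric} requires. Applying that observation to the isometric subgraph $H_i$ shows that $f\vert_{H_i}$ is an independent broadcast on $H_i$. By the definition of the broadcast independence number as the maximum weight of an independent broadcast, this yields
\[
\sum_{v \in V(H_i)} f(v) = (f\vert_{H_i})(V(H_i)) \leq \alpha_b(H_i)
\]
for each $i$, which is the key per-subgraph estimate.

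Next I would exploit the pairwise vertex-disjointness of the $H_i$ to partition the relevant vertices of $G$ into the blocks $V(H_1), \dots, V(H_k)$ together with the remainder $V(G)\setminus V(H)$, where $V(H) = \bigcup_{i=1}^{k} V(H_i)$. Disjointness guarantees no vertex is counted twice, so the total weight splits cleanly as
\[
f(V(G)) = \sum_{i=1}^{k} \sum_{v \in V(H_i)} f(v) + \sum_{v \in V(G)\setminus V(H)} f(v).
\]
Substituting the per-subgraph bound $\sum_{v \in V(H_i)} f(v) \leq \alpha_b(H_i)$ into the first sum immediately gives the claimed inequality.

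I do not expect a genuine obstacle, since the lemma is essentially a bookkeeping consequence of Observation~\ref{obs:isometric}. The one point requiring care is verifying that the hypotheses of that observation hold for every $H_i$ simultaneously, namely that each $H_i$ is isometric and carries at least two broadcasting vertices under $f$; both are assumed in the statement. I would also note explicitly that the non-broadcasting vertices of each $H_i$ contribute $0$ on both sides, so restricting $f$ introduces no boundary effects, and that the disjointness of the blocks is what legitimizes summing the local bounds without overcounting.
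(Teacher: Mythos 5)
Your proof is correct and follows exactly the same route as the paper: apply Observation~\ref{obs:isometric} to each $H_i$ to conclude $f(V(H_i)) \leq \alpha_b(H_i)$, then use vertex-disjointness to split $f(V(G))$ into the per-subgraph sums plus the remainder. The paper states this in two sentences; your version merely spells out the bookkeeping more explicitly.
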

\begin{proof}
From Observation~\ref{obs:isometric}, we know $f\vert_{H_i}$ is an independent broadcast on $H_i$ for each $i$ giving $f(V(H_i)) \leq \alpha_{b}(H_i)$. The result is immediate.
\end{proof}

An idea used throughout the paper is the following.  If $f$ is an $\alpha_b$-broadcast on $G$, then $f$ is dominating.  If some vertex $v$ does not hear the broadcast, then we may set $f(v)=1$ (and leave all other vertices unchanged) to obtain an independent broadcast of heavier weight. Using this we prove the following, which states in our setting ``leaves hear leaves''.

\begin{lem}\label{lem:leafhearleaf}
Let $T$ be a perfect binary or $k$-ary tree and let $f$ be an $\alpha_b$-broadcast.  Then each leaf of $T$ is $f$-dominated by a leaf. 
\end{lem}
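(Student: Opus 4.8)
The plan is to argue by contradiction. Suppose some leaf $\ell$ is dominated but every broadcaster it hears is an internal vertex. I would fix one such broadcaster $v$, so that $v$ is internal and $d(v,\ell)\le f(v)$, and then \emph{relocate} the entire broadcast of $v$ to a leaf deep inside the subtree rooted at $v$ (write $T_v$ for this subtree), with a strictly larger power. If this relocation yields an independent broadcast of greater weight, it contradicts the optimality of $f$ and proves the lemma.

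The key structural fact I would establish first is that $T_v$ contains no broadcaster other than $v$. Because $T$ is perfect, every leaf of $T_v$ sits at the same distance $s := h-\mathrm{depth}(v)$ from $v$, and every leaf of $T$ lies at distance at least $s$ from $v$ (those in $T_v$ at distance exactly $s$, the others strictly farther). In particular $s \le d(v,\ell)\le f(v)$, so every vertex of $T_v$ lies in the ball $N_{f(v)}[v]$ and is heard by $v$; since $f$ is independent, no second broadcaster can lie in $T_v$. Note that $v$ internal forces $s\ge 1$, which is where the strict weight gain comes from. I would also record the eccentricity identity $\mathrm{ecc}_T(x)=\mathrm{depth}(x)+h$ for perfect trees, so that descending $s$ levels raises the allowable power by exactly $s$.

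Now I would pick any leaf $m$ of $T_v$ and define $g$ by $g(m)=f(v)+s$, $g(v)=0$, and $g=f$ elsewhere. Since $T_v$ is broadcaster-free, $f(m)=0$, so $g(V)=f(V)+s>f(V)$. Validity is immediate from the eccentricity identity, as $g(m)=f(v)+s\le \mathrm{ecc}_T(v)+s=\mathrm{ecc}_T(m)$. For independence, every surviving broadcaster $w$ lies outside $T_v$, so the $m$--$w$ path runs through $v$ and $d(m,w)=s+d(v,w)$; combining this with $d(v,w)>f(v)$ and $d(v,w)>f(w)$ (independence of $f$) gives $d(m,w)>f(v)+s=g(m)$ and $d(m,w)>f(w)$, so neither $m$ nor $w$ hears the other, while all remaining pairs are untouched. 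Hence $g$ is an independent broadcast of strictly larger weight, the desired contradiction. The one delicate point, and the step I would write out most carefully, is the structural claim that $T_v$ is broadcaster-free: it is precisely what guarantees that pushing $v$ down to a leaf can only \emph{increase} distances to the remaining broadcasters and therefore cannot destroy independence. Everything after that is bookkeeping with the eccentricity identity.
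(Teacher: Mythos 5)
Your proof is correct and takes essentially the same route as the paper's: relocate the internal broadcaster $v$'s power to a leaf of the subtree rooted at $v$, increased by the depth difference $s \geq 1$, after noting that independence forces every other broadcaster outside that subtree, so distances from the new leaf to the remaining broadcasters exceed both relevant powers. The only (minor) difference is that you justify $f(v) \geq s$, and hence that the subtree is broadcaster-free, by comparing distances from $v$ to leaves directly, whereas the paper instead chooses the pair $(l,v)$ with $d(l,v)$ minimum; both are sound.
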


\begin{proof}
Suppose to the contrary that there is a leaf $l$ which is $f$-dominated by a non-leaf vertex $v$, and suppose $l$ and $v$ are chosen so that $d(l,v)$ is minimum with respect to this property.  In particular, $l$ is a leaf in the subtree rooted at $v$ (since $T$ is perfect) and $v$ dominates the entire subtree.
Let $i = d(l,v)$. Define a broadcast $g$ by
\[ g(x)=
    \begin{dcases}
        f(v) + i & x=l \\
        0 & x=v \\
        f(x) & \mbox{otherwise}. \\
    \end{dcases}
\]
Let $z$ be any $f$-broadcast vertex distinct from $v$.  Since $l$ is in the subtree rooted at $v$ and $f$ is independent, $z$ is not in the subtree rooted at $v$.  Thus, the unique $l-z$ path in $T$  passes through $v$.
Hence, $d(l,z)=d(l,v)+d(v,z) \geq i+f(v)+1 > g(l)$. So $g$ is independent, with $g(V)>f(V)$. This contradicts our choice of $f$.
\end{proof}

We note that the above lemma does not hold in general trees (a perfect binary tree of height $8$ with a leaf added to a grandchild of the root provides an counterexample). This is in contrast to optimal boundary independent broadcasts~\cite{Neilson2019} where leaves only hear leaves for any tree.

\subsection{Perfect Binary Trees}

We now determine $\alpha_b$ for perfect binary trees.

\begin{obs}\label{obs:leafcount}
Let $T$ be a perfect binary tree and $f$ be a broadcast. Suppose $l$ is a leaf such that $f(l)>0$.
Then $l$ dominates $2^{\lfloor{\frac{f(l)}{2}}\rfloor}$ leaves.
\end{obs}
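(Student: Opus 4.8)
The plan is to reduce the count to the number of leaves in a suitable subtree by exploiting the fact that in a perfect binary tree all leaf--leaf distances are even. Throughout, let $T$ have height $h$, root the tree, and measure depth as distance from the root, so that every leaf sits at depth $h$.

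First I would compute leaf--leaf distances explicitly. For a leaf $l$ and any other leaf $l'$, let $a$ be their lowest common ancestor, say at depth $j$. The unique $l$--$l'$ path climbs from $l$ up to $a$ and descends to $l'$, so since both leaves lie at depth $h$ we get $d(l,l') = 2(h-j)$. In particular every leaf--leaf distance is even, taking values in $\{0,2,4,\dots,2h\}$; this is the structural fact that forces the floor to appear.

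Next I would translate the domination condition into a condition on the lowest common ancestor. Set $m = \lfloor f(l)/2 \rfloor$. Because leaf distances are even, for a leaf $l'$ we have $d(l,l') \leq f(l)$ if and only if $d(l,l') \leq 2m$, i.e.\ if and only if $h - j \leq m$, where $j$ is the depth of the lowest common ancestor of $l$ and $l'$. Equivalently, that ancestor must lie at depth at least $h-m$. Since $f(l) \leq \mathrm{ecc}_T(l) = 2h$ we have $m \leq h$, so the ancestor $a_m$ of $l$ at depth $h-m$ is well defined, and the leaves $l'$ satisfying the condition are exactly the leaves whose lowest common ancestor with $l$ lies on the path from $l$ up to $a_m$, which is precisely the leaf set of the subtree rooted at $a_m$.

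Finally I would count: the subtree rooted at $a_m$ is itself a perfect binary tree of height $m$, hence has exactly $2^m = 2^{\lfloor f(l)/2 \rfloor}$ leaves, and every one of them is within distance $f(l)$ of $l$ and so is dominated by $l$. I expect the only real subtlety to be the even-distance observation and its consequence that an odd value of $f(l)$ gains nothing over $f(l)-1$ in reaching additional leaves; once that is pinned down, the identification with the subtree leaves and the power-of-two count are immediate, including the boundary case $f(l)=1$, where $m=0$ and $l$ dominates only itself.
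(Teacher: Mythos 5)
Your proof is correct and follows the same approach as the paper: identify the ancestor $v$ of $l$ at distance $\lfloor f(l)/2\rfloor$ and show the dominated leaves are exactly the leaves of the subtree rooted at $v$. You simply spell out the even-distance argument that the paper leaves as ``easy to see.''
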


\begin{proof}
Let $v$ the ancestor of $l$ at distance $\lfloor{\frac{f(l)}{2}}\rfloor$.  It is easy to see $l$ dominates precisely the leaves in the subtree rooted at $v$.
\end{proof}

\begin{lem}\label{lem:leafpower}
Let $T$ be a perfect binary tree of height at least $2$ and suppose $f$ is an $\alpha_b$-broadcast. If $l$ is a broadcasting leaf, then $f(l)= 3$.
\end{lem}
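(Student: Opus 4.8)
The plan is to prove the two inequalities \(f(l)\le 3\) and \(f(l)\ge 3\) separately, in each case comparing \(f\) against a modified broadcast obtained by reshuffling weight among leaves. Throughout I use two facts established above: by Observation~\ref{obs:leafcount} a broadcasting leaf of power \(r\) dominates exactly the \(2^{\lfloor r/2\rfloor}\) leaves lying below its ancestor at height \(\lfloor r/2\rfloor\), and by Lemma~\ref{lem:leafhearleaf} every leaf of \(T\) is dominated by a broadcasting leaf. Since \(T\) has height at least \(2\), every leaf has eccentricity at least \(4\), so assigning power \(3\) to any leaf yields a legal broadcast; this is where the height hypothesis is used.

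For the upper bound, suppose \(f(l)=r\ge 4\) and let \(v\) be the ancestor of \(l\) at height \(\lfloor r/2\rfloor\ge 2\). I replace the single broadcast at \(l\) by a power-\(3\) broadcast placed on one leaf inside each of the \(2^{\lfloor r/2\rfloor-1}\) penultimate-level subtrees of the subtree rooted at \(v\). These new leaves are pairwise at distance at least \(4\), hence mutually independent, and each dominates its own sibling pair. The key point is that, by independence of \(f\), every other broadcasting vertex \(z\) lies outside the subtree rooted at \(v\) (any vertex of that subtree is within distance \(2\lfloor r/2\rfloor\le r\) of \(l\)); thus every \(l\)--\(z\) path runs through \(v\), and each new leaf \(l'\) satisfies \(d(l',z)=\lfloor r/2\rfloor+d(v,z)=d(l,z)>\max\{r,f(z)\}\ge\max\{3,f(z)\}\), so no new leaf conflicts with \(z\). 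The modified broadcast is therefore independent, and its weight exceeds that of \(f\) by \(3\cdot 2^{\lfloor r/2\rfloor-1}-r>0\), contradicting maximality. Hence \(f(l)\le 3\).

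For the lower bound I work pair by pair, where a \emph{pair} is the set of two leaves sharing a parent. Combining \(f(l)\le 3\) (now known for every broadcasting leaf) with Lemma~\ref{lem:leafhearleaf} and the fact that distinct leaves are at even distance at least \(2\), every leaf is dominated either by itself or by its sibling broadcasting with power at least \(2\). Analysing one pair \(\{x,y\}\) forces exactly one of the following: both \(x,y\) broadcast with power \(1\); or exactly one of them broadcasts with power \(2\) or \(3\). In particular a broadcasting leaf of power at most \(2\) occurs only in the first case or in the power-\(2\) subcase of the second, and in both situations the total weight on the pair is exactly \(2\). I then upgrade such a pair by setting one leaf to power \(3\) and the other to \(0\)—a gain of \(+1\)—and argue this preserves independence.

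The main obstacle is exactly this last independence check, since raising a leaf \(x\) to power \(3\) can only collide with a broadcasting vertex within distance \(3\) of \(x\): its (now silenced) sibling, its uncle at distance \(3\), or an ancestor at distance \(2\) or \(3\). The uncle is ruled out cleanly: were it broadcasting, its two leaf-children could neither broadcast themselves (distance \(1\) from the uncle) nor be reached by any other leaf (all other leaves are at distance at least \(4\), exceeding the maximum leaf power \(3\)), contradicting Lemma~\ref{lem:leafhearleaf}. The genuinely delicate case is an ancestor \(z\) broadcasting with small power that dominates \emph{no} leaf of its own subtree; such a \(z\) blocks the increase yet cannot merely be deleted, since deletion only breaks even. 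I expect to resolve this by relocating \(z\)'s weight downward onto a leaf of its subtree—possible precisely because \(z\) reaches none of those leaves—thereby recovering the lost weight and producing a strictly heavier independent broadcast, contradicting the maximality of \(f\) and forcing \(f(l)\ge 3\).
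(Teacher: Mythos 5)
Your upper bound $f(l)\le 3$ is correct and is essentially the paper's own argument, and your pair-by-pair framework for the lower bound is sound as far as it goes: each sibling pair carries weight exactly $2$ or $3$, the parent of a weight-$2$ pair cannot broadcast, and your exclusion of the uncle via Lemma~\ref{lem:leafhearleaf} is valid. The genuine gap is exactly the step you flag as unresolved: a broadcasting ancestor $z$ at distance $2$ (the grandparent, necessarily with $f(z)=1$) or distance $3$ (the great-grandparent, with $f(z)\le 2$). The proposed fix --- ``relocate $z$'s weight downward onto a leaf of its subtree'' --- fails. Since $z$ hears any leaf of its subtree of power $\ge d(z,\cdot)$, independence forces every pair below $z$ to have weight exactly $2$: either two siblings each of power $1$, or a $(2,0)$ pair. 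Adding $f(z)\ge 1$ to any single such leaf either raises a leaf to power $\ge 2$ while its sibling is still broadcasting at distance $2$ (the sibling then hears it), or raises a power-$2$ leaf to power $\ge 3$, and to power $4$ when $f(z)=2$, which reaches broadcasting leaves at distance $4$. Silencing the offending sibling restores independence but makes the exchange break even, so no contradiction is obtained and $f(l)\ge 3$ is not established.

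The repair is not a relocation onto one leaf but a simultaneous upgrade of \emph{every} pair in the subtree rooted at $z$, which is what the paper does. If $z$ is the grandparent, the four leaves below $z$ all have power $1$ and $f(z)=1$, so replacing this configuration by two $(3,0)$ pairs with $z$ silenced turns weight $5$ into $6$. If $z$ is the great-grandparent, each of the four pairs below $z$ has weight $2$ and $f(z)\le 2$, so four $(3,0)$ pairs with $z$ silenced turn weight at most $10$ into $12$. Independence survives because any other broadcasting vertex $w$ satisfies $d(z,w)>f(z)\ge 1$, hence lies at distance at least $d(\ell',z)+2\ge 4$ from each new power-$3$ leaf $\ell'$, and $d(\ell',w)=d(\ell',z)+d(z,w)>f(w)$. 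With this replacement your argument closes; note the paper reaches the same conclusion by first eliminating power-$1$ leaves and then power-$2$ leaves rather than by classifying pairs, but the exchange it performs in each case is precisely this whole-subtree upgrade.
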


\begin{proof}
Suppose to the contrary that there is some leaf $l$ such that $f(l) \geq 4$. Using Observation~\ref{obs:leafcount}, let $V_{f(l)} = \{v_1,v_2, \ldots, v_{2^{\lfloor{\frac{f(l)}{2}\rfloor}}}\}$ be the set of leaves which $l$ dominates ordered by non-decreasing distance from $l$, in particular $v_1=l$. Define a broadcast $g$ as
\[ g(x)=
    \begin{dcases}
        3 & x=v_i \in V_{f(l)}, i \textrm{ is odd} \\
        0 & x=v_i \in V_{f(l)}, i \textrm{ is even} \\
        f(x) & \textrm{otherwise}. \\
    \end{dcases}
\]
Note that $v_i$ and $v_j$ are at distance at least $4$ from each other unless $j = i \pm 1$.  This implies $g$ is independent.  As $3\cdot 2^{\lfloor{\frac{f(l)}{2}}\rfloor -1} > f(l)$ (for $f(l) \geq 4$), we have our desired contradiction.

Now suppose some leaf $l$ satisfies $f(l)=1$. Let $l'$ be the sibling of $l$.  By Lemma~\ref{lem:leafhearleaf}, $f(l')=1$. Let $z$ be the ancestor at distance $2$ from $l$ and let $l''$ be a leaf at distance $4$ from $l$ in the subtree rooted at $z$.  If $f(z) > 0$, then $f(z)=1$ and $l''$ together with its sibling have a total power of $2$. Define a broadcast $g$ by 
\[ g(x)=
    \begin{dcases}
        3 & x \in \{l, l''\} \\
        0 & x \mbox{ in the subtree rooted at $z$, $x \neq l, l''$ } \\
        f(x) & \textrm{otherwise}. \\
    \end{dcases}
\] 
It is easy to see $g(V) > f(V)$. The set of vertices that were dominated by $z$ and its descendants is precisely the set now dominated by $l$ and $l''$ showing $g$ is independent. 

Otherwise, $f(z) = 0$.  In this case, we set $g(l) = 2, g(l')=0$ and leave all other values unchanged giving $f(V)=g(V)$.  Hence, we may assume $f$ takes the value $2$ or $3$ on all leaves.  We now prove that there can be no leaf with power $2$.

Suppose to the contrary $f(l)=2$ for some leaf $l$. If $T$ has height $2$, then the broadcast $g$ obtained by setting $g(l)=3$ and leaving other values unchanged is an independent broadcast of greater weight. Hence, $T$ has height at least $3$ and we let $z$ be the ancestor of $l$ at distance $3$. The subtree rooted at $z$ is $T_z$ as depicted in Figure~\ref{fig:Tz}. If $f(z)=0$, then again we can increase the power at $l$ to $3$ and obtain an independent broadcast of greater weight.  

Consequently, $1 \leq f(z) \leq 2$.
By the above results and using the fact that $f$ is both independent and dominating (as any $\alpha_b$-broadcast must be), the leaves in $T_z$ must broadcast with power $2$ or $0$. Moreover, without loss of generality we assume $f(v_i) = 2$ for $i=1, 3, 5, 7$ and all other leaves are non-broadcasting (as depicted in Figure~\ref{fig:Tz}).
\begin{center}
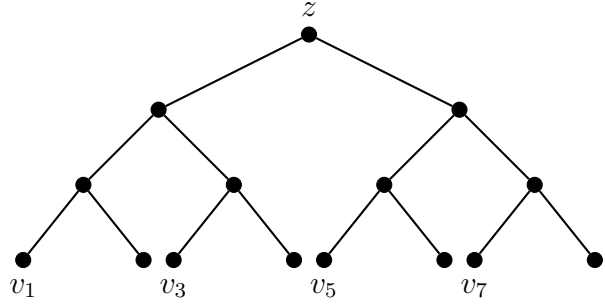

\begin{tikzpicture}
  \node[vertex,label={90:$z$}] (v0) at (4,3) {};
  \node[vertex] (v10) at (2,2) {};
  \node[vertex] (v11) at (6,2) {};
  \node[vertex] (v20) at (1,1) {};
  \node[vertex] (v21) at (3,1) {};
  \node[vertex] (v22) at (5,1) {};
  \node[vertex] (v23) at (7,1) {};

  \node[vertex,label={270:$v_1$}] (v30) at (0.2,0) {};
  \node[vertex] (v31) at (1.8,0) {};
  \node[vertex,label={270:$v_3$}] (v32) at (2.2,0) {};
  \node[vertex] (v33) at (3.8,0) {};

  \node[vertex,label={270:$v_5$}] (v34) at (4.2,0) {};
  \node[vertex] (v35) at (5.8,0) {};
  \node[vertex,label={270:$v_7$}] (v36) at (6.2,0) {};
  \node[vertex] (v37) at (7.8,0) {};

  \draw[thick] (v30)--(v20)--(v31) (v32)--(v21)--(v33)
  (v34)--(v22)--(v35) (v36)--(v23)--(v37);

  \draw[thick] (v20)--(v10)--(v21) (v22)--(v11)--(v23); 
  
  \draw[thick] (v10)--(v0)--(v11);
\end{tikzpicture}
    
    \captionof{figure}{The subtree $T_z$}\label{fig:Tz}
\end{center}
The weight of $f$ on the subtree $T_z$ is $9$ or $10$. The broadcast $g$ defined by
\[ g(x)=
    \begin{dcases}
        3 & x\in \{ v_1, v_3, v_5, v_7 \} \\
        0 & x\in \{z\} \\
        f(x) & \textrm{otherwise} \\
    \end{dcases}
\]
is independent as $N_{f(v_i)}[v_i]$ contained within $T_z$ for each $i=1, 3, 5, 7$. Also, $g(V) > f(V)$, contradicting our choice of $f$.

Therefore, in an $\alpha_b$-broadcast $f$ of a perfect binary tree of height $h$, all broadcasting leaves have power $3$.
\end{proof}

We remark that broadcasting with power $3$ from each $v_i$ and with power $0$ from the corresponding siblings is a pattern repeated throughout this section.  We will refer to such a broadcast as \emph{broadcasting from half the leaves}.

\begin{lem}\label{lem:atmost4}
Let $f$ be an $\alpha_b$-broadcast on a perfect binary tree $T$.  Then $f(v) \leq 4$ for all $v \in V(T)$. 
\end{lem}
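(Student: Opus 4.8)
The plan is to argue by contradiction: suppose $f$ is an $\alpha_b$-broadcast with $f(v) = p \geq 5$ for some $v$. By Lemma~\ref{lem:leafpower} every broadcasting leaf has power $3$, so $v$ must be an internal vertex. The first step is to pin down the leaf structure of any $\alpha_b$-broadcast. I would show that each sibling pair of leaves contains \emph{exactly one} broadcasting leaf: since $f$ is dominating, each leaf is dominated, and by Lemma~\ref{lem:leafhearleaf} together with Observation~\ref{obs:leafcount} a broadcasting leaf (power $3$) dominates only itself and its sibling, so every pair contains at least one broadcaster, while two broadcasting siblings would hear each other (distance $2 < 3$), violating independence. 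The key consequence I would extract is: \emph{no broadcasting vertex can dominate both leaves of a sibling pair}, because if a broadcaster $w$ dominated a broadcasting leaf $\ell$ (i.e.\ $d(w,\ell)\le f(w)$) then $\ell$ would hear both itself and $w$, so $|H(\ell)|\ge 2$, contradicting independence; hence both leaves of any pair dominated by $v$ would be non-broadcasting, contradicting the pair-structure just established.

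With this in hand I would split on the height $a$ of the subtree rooted at $v$ (so $a\ge 1$ since $v$ is internal). In the case $a \le p$, the vertex $v$ dominates \emph{all} $2^a$ leaves of its subtree (each at distance $a \le p$), and since $a\ge 1$ these include at least one full sibling pair. This immediately contradicts the highlighted consequence, so $a \le p$ is impossible. This case is the clean analogue of the replacement arguments already used in the proofs of Lemmas~\ref{lem:leafhearleaf} and~\ref{lem:leafpower}.

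The remaining case $a > p$ (so $v$ sits strictly above all of its leaves and dominates no leaf at all) is where the main obstacle lies, since the leaf-based contradiction is now vacuous and the ball $N_p[v]$ consists entirely of internal vertices. Here I would argue via a weight-increasing replacement: because independence forces a single high-power broadcaster to \emph{suppress} all other broadcasters inside $N_p[v]$, one can cover the same region by several broadcasters of power at most $4$ (for instance power-$3$ broadcasters placed pairwise at distance at least $4$ so that none hears another), strictly increasing $f(V)$ and contradicting the optimality of $f$. To make this precise and uniform I expect it is cleanest to choose, among all vertices with $f\text{-value}\ge 5$, one of \emph{minimum} height $a$, so that every broadcaster strictly below $v$ already has power at most $4$; the subtree rooted at $v$ is isometric in $T$, so Lemma~\ref{lem:isocollection} lets me compare $f\vert$ on that subtree against a heavier independent broadcast built from these low-power broadcasters. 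The delicate part is verifying that the replacement simultaneously preserves domination of the internal vertices formerly covered only by $v$, preserves independence against the surrounding leaf-broadcasters, and yields a net weight gain; this bookkeeping is the crux of the argument, whereas the case $a \le p$ is essentially immediate from the leaf structure.
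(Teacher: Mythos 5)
Your first case ($a \le p$) is correct and cleanly argued: the ``exactly one broadcaster per sibling pair'' structure follows from Lemmas~\ref{lem:leafhearleaf} and~\ref{lem:leafpower} together with Observation~\ref{obs:leafcount}, and a high-power internal vertex that reaches a full sibling pair of leaves does force a contradiction. But the remaining case $a>p$ is the heart of the lemma (it is the generic situation: $v$ deep inside, or at the root of, a tall tree, with $N_p[v]$ containing no leaves), and there you have only described a strategy while explicitly conceding that ``the bookkeeping is the crux.'' That bookkeeping is precisely what a proof must supply, so as written the proposal is incomplete. The paper closes this case with a concrete construction you should compare against: let $S=\{v_1,\dots,v_{2^{p-3}}\}$ be the vertices at distance $p-3$ from $v$ in the subtree rooted at $v$, in left-to-right order; set $g=3$ on the odd-indexed $v_i$, $g(v)=0$, and $g=f$ elsewhere. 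Independence follows because the odd-indexed $v_i$ are pairwise at distance at least $4$, and any other $f$-broadcaster $w$ in the subtree satisfies $d(v,w)>p$ (else $w$ hears $v$), hence $d(w,S)\ge 4$; the weight strictly increases since $3\cdot 2^{p-4}>p$ for $p\ge 5$. No minimality choice of $v$ and no appeal to Lemma~\ref{lem:isocollection} is needed.

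Two further points. First, your worry about ``preserving domination of the internal vertices formerly covered only by $v$'' is a red herring: $\alpha_b$ is the maximum weight over \emph{independent} broadcasts, so to contradict optimality you only need the modified broadcast to be independent and strictly heavier; it need not dominate. Second, your case split is not wasted effort: the paper's construction tacitly assumes the subtree rooted at $v$ has height at least $p-3$ so that $S$ is nonempty, and your $a\le p$ argument is exactly what disposes of the shallow-subtree regime where that assumption could fail. So the two arguments complement each other, but your write-up still owes the reader the explicit replacement in the tall-subtree case.
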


\begin{proof}
Suppose to the contrary that there is some $v$ such that $f(v)=p\geq 5$. By Lemma~\ref{lem:leafpower}, we know $v$ cannot be a leaf. Let $T'$ be the subtree rooted at $v$. At distance $p-3$ from $v$, there are $2^{p-3}$ vertices in $T'$. Let $S = v_1,v_2, \ldots, v_{2^{p-3}}$ be these $2^{p-3}$ vertices in the natural order from left to right. Define a broadcast $g$ as
\[ g(x)=
    \begin{dcases}
        3 & x=v_i\in S, \textrm{ $i$ is odd} \\
        0 & x=v \\
        f(x) & \textrm{otherwise} \\
    \end{dcases}
\]
Since $f$ is independent, the distance from $S$ to a broadcast vertex in $T'$ (other than $v$) is at least 4, showing $g$ is independent. Further, $g(V)>f(V)$ as $3\cdot 2^{p-4} > p$, $p\geq 5$. This contradicts our choice of $f$.
\end{proof}

Below we give a formula for $\alpha_b(T_h)$ for $h \geq 2$. We remark the exceptional case $h=1$ has $\alpha_b(T_1)=2$.

\begin{thm}
Let $T_h$ be a perfect binary tree of height $h=4m+r \geq 2$, $1 \leq r \leq 4$. Then
\[
\alpha_b(T_h) = 3 \cdot 2^{r-1} \frac{16^{m+1}-1}{15}+b_r
\]
where $b_1=1, b_2=b_3=0, b_4=3$
\end{thm}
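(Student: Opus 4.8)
The plan is to set up a recursion on the height $h$ that counts the total weight of an $\alpha_b$-broadcast by exploiting the structural lemmas already established. By Lemmas~\ref{lem:leafhearleaf}, \ref{lem:leafpower}, and \ref{lem:atmost4}, every broadcasting leaf has power exactly $3$, broadcasts occur ``from half the leaves,'' and no vertex exceeds power $4$. The key observation I would first nail down is how a power-$3$ broadcast from half the leaves at the bottom interacts with the rest of the tree: a leaf broadcasting with power $3$ dominates its sibling and reaches up to its great-grandparent, covering a height-$3$ subtree worth of leaves. So I would describe the canonical optimal broadcast explicitly---broadcast with power $3$ from half the leaves---and compute its weight, then argue separately that no heavier independent broadcast exists.

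First I would establish the lower bound by exhibiting an explicit independent broadcast and computing its weight. The natural construction partitions the leaves into sibling pairs, broadcasts power $3$ from one leaf in each appropriately chosen pair so that the balls of radius $3$ tile the tree, and then handles the residual $r$ levels near the root (where $h = 4m+r$) with the correction term $b_r$. Counting the number of power-$3$ broadcasting vertices across the $m$ full blocks of height $4$ gives a geometric series in $16$; indeed the factor $\frac{16^{m+1}-1}{15} = 1 + 16 + \cdots + 16^{m}$ is exactly $\sum_{j=0}^{m} 16^j$, so I expect each successive height-$4$ block to multiply the number of broadcasting leaves by $16$ (since $2^4 = 16$), with the leading coefficient $3 \cdot 2^{r-1}$ accounting for the power-$3$ weight and the offset from the top $r$ levels. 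I would verify the four base cases $r \in \{1,2,3,4\}$ directly on small trees to pin down the $b_r$ values, which serve as the anchor of the induction.

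For the upper bound, I would use Lemma~\ref{lem:isocollection} together with the fact that every subtree rooted at a fixed height is isometric in $T_h$. The idea is to peel off the top portion of the tree and apply the recursion: partition $T_h$ into the copies of $T_{h-4}$ (or whatever block size makes the arithmetic clean) hanging below a fixed level, each of which is isometric, and bound $f(V(T_h))$ by the sum of $\alpha_b$ over these isometric copies plus the weight $f$ assigns to the top levels. The subtlety here---and the step I expect to be the main obstacle---is that Lemma~\ref{lem:isocollection} only applies to subtrees on which $f$ has at least two broadcasting vertices, so I must carefully handle subtrees where $f$ broadcasts from fewer than two vertices, as well as broadcasts originating in the top levels that reach down into the subtrees. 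Reconciling these boundary cases with the clean recursive structure, and showing that any such ``cross-level'' broadcast cannot beat the canonical half-the-leaves pattern, is where the real work lies.

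Once both bounds match the claimed formula, the theorem follows. Concretely, I would organize the argument as: (i) fix the explicit half-the-leaves broadcast and show it is independent and dominating; (ii) compute its weight as $3 \cdot 2^{r-1}\frac{16^{m+1}-1}{15}+b_r$ via the geometric sum, establishing $\alpha_b(T_h) \geq$ the formula; (iii) prove the reverse inequality by induction on $m$, using the isometric decomposition and the established constraints that every broadcasting leaf has power $3$ and every vertex has power at most $4$; and (iv) settle the base cases. The hardest part is step (iii)'s induction, since I must show that no clever mixing of higher-power internal broadcasts with leaf broadcasts exceeds the uniform pattern---this is precisely where Lemmas~\ref{lem:leafpower} and~\ref{lem:atmost4} are doing the heavy lifting by constraining the local structure enough to force the recursion through.
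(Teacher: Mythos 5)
Your overall architecture (explicit half-the-leaves construction for the lower bound, Lemma~\ref{lem:isocollection} plus a height-$4$ recursion for the upper bound, base cases $r\in\{1,2,3,4\}$) matches the paper, and your lower-bound steps (i)--(ii) are sound. But the decomposition you propose for the upper bound is oriented the wrong way, and this is a genuine gap, not just a technicality. You suggest partitioning $T_h$ into the $16$ copies of $T_{h-4}$ hanging below level $4$ plus the top levels, giving $\alpha_b(T_h)\leq 16\,\alpha_b(T_{h-4})+f(\text{top})$. That inequality is valid but not tight: for example $\alpha_b(T_4)=27$ and $\alpha_b(T_8)=411$, yet $16\cdot 27=432>411$. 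The reason is that an optimal broadcast of $T_{h-4}$ in isolation spends weight at or near its root (e.g.\ power $3$ or $4$ at the root), and the sixteen roots of your copies sit at mutual distance as small as $2$ inside $T_h$, so the sixteen optima cannot coexist independently. Your induction on $m$ therefore cannot close without a substantial extra argument quantifying this interference, which your sketch does not supply; changing the ``block size'' does not help, since the same root-crowding occurs for any decomposition that places the large recursive pieces at the bottom (already $2\,\alpha_b(T_4)=54>52=\alpha_b(T_5)$).

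The paper runs the recursion in the opposite direction: it peels $2^{h-3}$ copies of $T_3$ off the \emph{bottom} (rooted at the vertices of level $h-3$) and recurses on the single copy of $T_{h-4}$ left at the \emph{top}, giving $\alpha_b(T_h)\leq 2^{h-3}\alpha_b(T_3)+\alpha_b(T_{h-4})$. This direction is tight because Lemmas~\ref{lem:leafhearleaf} and~\ref{lem:leafpower} pin down $f$ on each bottom $T_3$ exactly (four leaves of power $3$, total weight $12=\alpha_b(T_3)$), the broadcasting leaves of distinct pieces are at distance at least $4$ so the per-piece optima genuinely combine, and the hypothesis of Lemma~\ref{lem:isocollection} (at least two broadcast vertices per piece) is verified for $T_{h-4}$ by a short exchange argument together with Lemma~\ref{lem:atmost4}. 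If you reorient your decomposition this way, the rest of your outline goes through essentially as the paper's proof does, with the cases $h=4$ and $h=5$ handled separately as anchors.
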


\begin{proof}
By Lemmas~\ref{lem:leafhearleaf} and~\ref{lem:leafpower}, the result is immediate for $2 \leq h \leq 3$.

For $h=4$, we note $T_4$ is composed of two $T_3$ trees each connected to the root vertex, which we will call $v$. Let $f$ be an $\alpha_b$-broadcast. As no leaf hears $v$, $f(v) \leq 3$. By Lemma~\ref{lem:isocollection}, $f(V(T_4)) = \alpha_{b}(T_4) \leq 2\cdot \alpha_{b}(T_3) + f(v) \leq 2\cdot 12 +3 = 27$. This upper bound is achieved by broadcasting with power $3$ from half the leaves and power $3$ from $v$, which is an independent broadcast. 

Similar to $h=4$, let $v$ be the root of $T_5$ and observe the four grandchildren of $v$ are roots of a copy of $T_3$. Let $f$ be an $\alpha_b$-broadcast and let $T'$ be the copy of $T_1$ rooted at $v$. By Lemma~\ref{lem:isocollection}, $\alpha_{b}(T_5) \leq 4\cdot \alpha_{b}(T_3) + \sum_{u \in V(T')} f(u) \leq 4 \cdot 12 + 4 = 52$. (In $T'$, $f$ either broadcasts from a single vertex with power at most $4$ by Lemma~\ref{lem:atmost4}, or from two vertices with power $1$.) The upper bound is achieved by broadcasting with power $3$ from half the leaves and power $4$ from the root $v$. 

Thus suppose $h \geq 6$. Let $f$ be an $\alpha_b$-broadcast on $T_h$. There are $2^{h-3}$ vertices at level $h-3$ each of which is the root of a perfect binary tree of height $3$.  Call these subtrees $H_1, H_2, \dots, H_{2^{h-3}}$. Then $T_{h-4} = T_h \setminus \{ H_1, H_2, \dots, H_{2^{h-3}}\}$ is a perfect binary tree of height $h-4$. By Lemmas~\ref{lem:leafhearleaf} and~\ref{lem:leafpower}, we know $f(l)=3$ for exactly four leaves of each $H_i$.  

We claim there are at least two broadcast vertices in $T_{h-4}$. As $f$ must be dominating there is at least one broadcast vertex.  (No vertex in $T_{h-4}$ is $f$-dominated by a leaf from some $H_i$.) Since $h-4 \geq 2$, there are two leaves, say $l_1$ and $l_2$, in $T_{h-4}$ at distance $4$ from each other.  Moreover, they are at distance at least $4$ from the leaves of the original tree $T_h$.  Hence the broadcast defined by
\[
g(x)=
    \begin{dcases}
        3 & x \in \{ l_1, l_2 \} \\
        f(x) & x \in V(H_i), 1 \leq i \leq 2^{h-3} \\
        0 & \textrm{otherwise} \\
    \end{dcases}
\]
is an independent broadcast.  Thus, $f(V(T_{h-4})) \geq 6$.  By Lemma~\ref{lem:atmost4}, $f$ must broadcast on at least two vertices in $T_{h-4}$. 

The collection $H_1, H_2, \dots, H_{2^{h-3}}, T_{h-4}$ is a partition of the entire tree $T_h$ into vertex disjoint, isometric subgraphs with $f$ broadcasting on at least two vertices in each subgraph.  By Lemma~\ref{lem:isocollection},
\[
\alpha_b(T_h) \leq 2^{h-3} \cdot \alpha_b(T_3) + \alpha_b(T_{h-4})
\]
Moreover, using an $\alpha_b$-broadcast on each subgraph produces an independent broadcast, as we broadcast at leaves in each subgraph with power $3$ and the leaves in different subgraphs of the partition are distance at least $4$ apart.  That is, 
\begin{eqnarray*}
\alpha_b(T_h) &=& 2^{h-3} \cdot \alpha(T_3) + \alpha(T_{h-4}) \\
              &=& 2^{h-3} \cdot 12 + 3 \cdot 2^{r-1} \frac{16^{m}-1}{15}+b_r \\
              &=& 3 \cdot 2^{r-1} \left( 2^{h-r} + \frac{16^m-1}{15} \right)+b_r \\
              &=& 3 \cdot 2^{r-1} \frac{16^{m+1}-1}{15}+b_r
\end{eqnarray*}
\end{proof}

\subsection{Perfect $k$-ary Trees}

The development for perfect $k$-ary trees (recall we require $k\geq 3$), is similar to perfect binary trees. The difference is in this setting we broadcast with power $1$ from every leaf.  In the recursive step we will reduce the height by 2 (versus 4 above).  We begin with a straightforward observation.

\begin{obs}\label{obs:leafcountk}
Let $T$ be a perfect $k$-ary tree and $f$ be a broadcast. Suppose $l$ is a leaf such that $f(l)>0$.
Then $l$ dominates $k^{\lfloor{\frac{f(l)}{2}}\rfloor}$ leaves.
\end{obs}

\begin{lem}\label{lem:fpowerk}
Let $T$ be a perfect $k$-ary tree and $f$ be an $\alpha_b(T)$-broadcast. Then $f(v) \leq 1$ for all $v \in V(T)$ with $f(l) = 1$ for each leaf of $T$.
\end{lem}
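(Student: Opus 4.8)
The plan is to prove the two claims in sequence, first establishing that every leaf must broadcast with power exactly $1$, and then using this to show no vertex can broadcast with power greater than $1$.

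For the leaf claim, I would first argue that $f(l) \geq 1$ for every leaf $l$. Since $f$ is an $\alpha_b$-broadcast it is dominating, and by Lemma~\ref{lem:leafhearleaf} each leaf must be dominated by a leaf. I would then show that any broadcasting leaf has power exactly $1$ by an exchange argument analogous to Lemma~\ref{lem:leafpower}: if some leaf broadcasts with power $p \geq 2$, then by Observation~\ref{obs:leafcountk} it dominates $k^{\lfloor p/2 \rfloor}$ leaves, and I can redistribute by broadcasting with power $1$ from each of these dominated leaves. Since these $k^{\lfloor p/2 \rfloor}$ leaves are pairwise at distance at least $2$ from one another (they lie in distinct subtrees below the common ancestor at distance $\lfloor p/2 \rfloor$), assigning power $1$ to all of them keeps the broadcast independent. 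The weight gain is $k^{\lfloor p/2 \rfloor} - p$, and I would verify $k^{\lfloor p/2 \rfloor} > p$ for all $k \geq 3$ and $p \geq 2$ (for $p=2$ this is $k > 2$, which holds; larger $p$ is clear since $k^{\lfloor p/2\rfloor}$ grows geometrically). This contradicts maximality, so every broadcasting leaf has power $1$, and combined with domination, every leaf broadcasts with power exactly $1$.

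For the second claim, that $f(v) \leq 1$ for all $v$, I would suppose some internal vertex $v$ has $f(v) = p \geq 2$. Because $f$ is independent and every leaf already broadcasts with power $1$, the ball $N_p[v]$ contains leaves that hear $v$, contradicting that those leaves broadcast themselves. More carefully, $v$ has a descendant leaf $l$ at distance equal to the height of the subtree rooted at $v$; since $v$ broadcasts with power $p \geq 2$ and the nearest leaves are within distance $p$ of $v$ once the subtree is deep enough, the leaf $l$ would hear both itself and $v$, violating independence. I would handle the shallow cases (where $v$'s subtree has height less than $p$) by replacing the power-$p$ broadcast at $v$ with power-$1$ broadcasts at a suitable antichain of its descendants, again gaining weight via the geometric growth $k^{\lfloor p/2\rfloor} > p$.

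The main obstacle I anticipate is the bookkeeping in the second claim: I must argue cleanly that no internal vertex can broadcast with power $\geq 2$ without disturbing the already-fixed power-$1$ leaf broadcasts, and in particular I must rule out the possibility that an internal broadcast of power $p$ could be "compensated" elsewhere. The cleanest route is to observe that once all leaves broadcast with power $1$, independence forces every internal broadcasting vertex $v$ to satisfy $f(v) < d(v, l)$ for every leaf $l$, yet any such $v$ with $f(v) \geq 2$ either violates this (when $v$ has a nearby descendant leaf) or is dominated redundantly; the exchange to power-$1$ broadcasts on a descendant antichain then yields a strictly heavier independent broadcast, completing the contradiction.
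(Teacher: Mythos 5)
Your overall strategy (exchange arguments replacing a high-power broadcast by many power-$1$ broadcasts at dominated leaves or at a descendant antichain) is the same as the paper's, but there are two concrete gaps. First, the inequality $k^{\lfloor p/2\rfloor} > p$ that drives your leaf exchange is false when $k=3$ and $p=3$: there $k^{\lfloor p/2\rfloor}=3=p$, so the exchange does not strictly increase the weight and yields no contradiction. The paper flags exactly this case and repairs it by additionally broadcasting with power $1$ from the grandparent of $l$ to gain one extra unit; without some such extra move your argument does not rule out a leaf broadcasting with power $3$ in a perfect ternary tree.

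Second, your treatment of internal vertices has the case analysis inverted and uses the wrong count. If the subtree rooted at $v$ has height at most $p=f(v)$, then its leaves --- which by the first part already broadcast with power $1$ --- hear $v$, an immediate contradiction with independence; this is the \emph{shallow} case, not the ``deep enough'' case as you describe it. In the remaining case, where the subtree has height at least $p+1$, you need the exchange, and the relevant antichain is the set of $k^{p-1}$ descendants of $v$ at distance $p-1$, with the inequality $k^{p-1}>p$ (the quantity $k^{\lfloor p/2\rfloor}$ counts leaves dominated by a broadcasting leaf and is not the right count here). Finally, to guarantee that the new power-$1$ broadcasts on that antichain do not hear some other internal vertex of power at least $2$ deeper in the subtree, you must choose $v$ so that no internal vertex below it broadcasts with power greater than $1$; the paper makes exactly this minimal choice, and your proposal does not address the issue.
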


\begin{proof}
We begin with the leaves. Suppose to the contrary $f(l) = p > 1$ for some leaf $l$.  Then $f$ dominates $k^{\lfloor{\frac{p}{2}}\rfloor}$ leaves. Create a new broadcast $g$ by setting $g(l')=1$ for each leaf that hears $l$ and $g(x)=f(x)$ otherwise.  Note, $k^{\lfloor{\frac{p}{2}}\rfloor} \geq p$ for $k \geq 3, p \geq 2$, with a strict inequality except for the case $k=3$ and $p=3$.  In this case we also set $g(u)=1$ for the grandparent $u$ of $l$.  As $g(V) > f(V)$, we obtain a contradiction.

On the other hand, an $\alpha_b$-broadcast must be dominating. By Lemma ~\ref{lem:leafhearleaf}, each leaf can only hear itself; hence, $f(l)=1$ for all leaves.

Finally, let $v$ be an internal vertex of $T$ and suppose to the contrary that $f(v) = p \geq 2$.  Over all such internal vertices, choose $v$ so that no internal vertex in the subtree rooted at $v$ broadcast with power greater than $1$. Let $T'$ be the subtree rooted at $v$. Since $f$ is independent, by Lemma~\ref{lem:leafhearleaf}, $T'$ must have height at least $p+1$. Let $S$ be the $k^{p-1}$ vertices at height $p-1$ in $T'$. Define a broadcast $g$ as
\[ g(x)=
    \begin{dcases}
        1 & x \in S \\
        0 & x=v \\
        f(x) & otherwise \\
    \end{dcases}
\]
We note $g$ is independent and $g(V) > f(V)$ as $k^{p-1} >p$ for $k \geq 3$ and $p \geq 2$. This contradicts our choice of $f$.
\end{proof}

A direct consequence of Lemma~\ref{lem:fpowerk} is if $f$ is an $\alpha_b$-broadcast in $T^k_h$, then the set of broadcast vertices form an independent set giving $\alpha(T_h^k) = \alpha_b(T_k^h)$. From this observation and the formula for $\alpha(T_h^k)$ we obtain the following.

\begin{thm}
Let $T_{h}^{k}$ be a perfect $k$-ary tree of height $h$.  Then $\alpha_b(T_h^k) = \alpha(T_h^k)$.
In particular,
\[ \alpha_{b}(T_{h}^{k})=
    \begin{dcases}
        1+k^2+\cdots+k^{2m} = \frac{{(k^2)}^{m+1}-1}{k^{2}-1} & \mbox{if } h=2m \\
        k + k^3 + \cdots + k^{2m+1} = k \cdot \frac{{(k^2)}^{m+1}-1}{k^{2}-1} & \mbox{if } h=2m+1 \\
    \end{dcases}
\]
\end{thm}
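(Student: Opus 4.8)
The plan is to prove the two assertions in turn: the identity $\alpha_b(T_h^k)=\alpha(T_h^k)$, and then the closed form for the ordinary independence number $\alpha(T_h^k)$.

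For the identity I would appeal directly to Lemma~\ref{lem:fpowerk}. If $f$ is an $\alpha_b$-broadcast, then $f$ takes only the values $0$ and $1$, so a broadcasting vertex $u$ is heard by $v$ exactly when $d(u,v)\le f(u)=1$, that is, when $u=v$ or $uv\in E$. Independence of $f$ thus forces $V_f^+$ to be an independent set, and since each broadcasting vertex has power $1$ we get $f(V)=|V_f^+|\le\alpha(T_h^k)$. Conversely, the indicator broadcast $\mathbf 1_I$ of a maximum independent set $I$ is independent of weight $|I|=\alpha(T_h^k)$, giving the reverse inequality. So it remains to compute $\alpha(T_h^k)$.

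For that computation I would use the level bipartition of the tree. Color each vertex by the parity of its level (distance from the root), let $L$ be the class whose parity matches $h$ (equivalently, the class containing all the leaves), and let $S$ be the other class. A color class is independent, so $\alpha\ge|L|$. For the matching upper bound, observe that every vertex of $S$ lies at a level strictly below $h$ and is therefore internal; mapping each such vertex to one fixed child gives an injection $S\to L$ along tree edges, hence a matching saturating $S$. Thus the maximum matching number satisfies $\nu\ge|S|$, and trivially $\nu\le|S|$ (a matching meets $S$ in distinct vertices), so $\nu=|S|$. Since $T_h^k$ is bipartite, K\"onig's theorem gives the minimum vertex cover number $\tau=\nu=|S|$, and the Gallai identity $\alpha+\tau=|V(T_h^k)|$ yields $\alpha=|V(T_h^k)|-|S|=|L|$.

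It then remains to evaluate $|L|$. Level $j$ has $k^j$ vertices, so summing over the levels congruent to $h\pmod 2$ gives $\sum_{i=0}^{m}k^{2i}$ when $h=2m$ and $\sum_{i=0}^{m}k^{2i+1}$ when $h=2m+1$; these geometric sums are exactly $\tfrac{(k^2)^{m+1}-1}{k^2-1}$ and $k\cdot\tfrac{(k^2)^{m+1}-1}{k^2-1}$. The only step carrying any content is the upper bound $\alpha\le|L|$, and the crucial point making its matching argument go through is that $S$ avoids the leaf level entirely, so every vertex of $S$ has a child available to be matched; everything else is immediate from Lemma~\ref{lem:fpowerk} or a routine series evaluation.
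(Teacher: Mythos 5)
Your proof is correct and follows the same route as the paper: Lemma~\ref{lem:fpowerk} forces an $\alpha_b$-broadcast to be the indicator function of an independent set, so $\alpha_b(T_h^k)=\alpha(T_h^k)$, and the closed form is just the size of the colour class containing the leaves. The only difference is that you supply a full K\"onig/Gallai justification of the formula for $\alpha(T_h^k)$, which the paper simply cites as known; that argument is valid (the matching saturating $S$ exists precisely because $S$ misses the leaf level) but is not needed beyond routine verification.
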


\section{Broadcast independence in spiders}\label{sec:spiders}
\subsection{Preliminary Results}

We now turn our attention another class of trees. A \emph{spider} is a tree obtained by subdividing $K_{1,k}$ where $k \geq 3$.  That is, there are $k$ leaves, one vertex $u$ such that $d(u) = k$, and the remaining vertices have degree $2$. Label the leaves $\{l_1, \dots, l_k \}$ in non-decreasing order of distance to $u$ and denoted by $d_i$ the distance from $l_i$ to $u$. We can thus denote the spider by $S(d_1, \dots, d_k)$. A path from a leaf to $u$ is called a \emph{branch}, and $u$ is called the \emph{branch vertex}.

Our first result says in an $\alpha_b$-broadcast of a spider, only leaves broadcast.

\begin{lem}\label{lem:spiderOnlyleaves}
Let $f$ be an $\alpha_b$-broadcast in a spider $T$. 
If $f(v) > 0$, then $v$ is a leaf.
\end{lem}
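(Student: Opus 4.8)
The plan is to argue by contradiction: suppose $f$ is an $\alpha_b$-broadcast in the spider $T = S(d_1,\dots,d_k)$ and some non-leaf vertex $v$ broadcasts, i.e.\ $f(v) > 0$. My goal is to produce a strictly heavier independent broadcast, contradicting maximality. First I would invoke the recurring technique from the excerpt (the ``leaves hear leaves'' philosophy and the domination observation): since $f$ is an $\alpha_b$-broadcast it must be dominating, so every leaf is heard. I would also note that because $f$ is independent, at most one of the non-leaf vertices can broadcast if its ball swallows the branch vertex $u$; indeed any broadcast from an interior vertex with enough power to reach $u$ forces all other broadcast vertices to lie outside its ball.

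The key structural idea is that an interior broadcasting vertex $v$ sits on a single branch, and its ball $N_{f(v)}[v]$ covers a contiguous stretch of that branch (and possibly spills across $u$ into other branches if $f(v)$ is large). The strategy is to \emph{push} the broadcast outward toward the leaf on $v$'s branch. Concretely, let $v$ lie on the branch ending in leaf $l_i$ at distance $t = d(v,l_i)$ from that leaf, and consider moving all the power of $v$ to $l_i$. If $v$ does not hear $u$ (that is $f(v) < d(v,u)$), then $v$'s entire ball is confined to the segment between $v$ and $l_i$; replacing $f(v)$ by a broadcast at $l_i$ with power $f(v) + t$ keeps the same set of dominated vertices on that branch while strictly increasing the weight by $t \geq 1$, and independence is preserved because no other broadcast vertex was inside $v$'s old ball and the new ball at $l_i$ is contained in the union of $v$'s old ball with the segment toward $l_i$. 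This handles the ``local'' case.

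The main obstacle, and the step I expect to require the most care, is the case where $f(v) \geq d(v,u)$, so $v$ actually hears the branch vertex and its ball extends into the other branches. Here moving the power naively to a single leaf may fail to dominate the portions of other branches that $v$ used to cover, so I cannot simply relocate the token. My plan is to treat this as essentially the situation where only one vertex broadcasts (since anything hearing $u$ forbids other broadcast vertices from being near $u$), and argue that reconfiguring to broadcast from several leaves is at least as heavy: replace the single central broadcast by broadcasts of power $d_j$ (or a carefully chosen value) from each leaf $l_j$ whose branch $v$ was covering, relying on a counting inequality of the type $\sum_j (\text{leaf power}) \geq f(v)$ analogous to the $3 \cdot 2^{\lfloor f(l)/2\rfloor - 1} > f(l)$ and $k^{p-1} > p$ inequalities used earlier in the paper. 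I would verify independence by checking that leaves on distinct branches are at distance $d_i + d_j$ apart, which is large enough to prevent mutual hearing once powers are bounded by the branch lengths.

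Once both cases yield a broadcast $g$ with $g(V) > f(V)$ (or $g(V) = f(V)$ but with strictly fewer interior broadcasters, allowing an inductive or minimal-counterexample descent on the number of interior broadcast vertices), the contradiction is complete, establishing that every broadcasting vertex in an $\alpha_b$-broadcast must be a leaf. I would streamline the write-up by choosing, among all $\alpha_b$-broadcasts, one minimizing the number of interior broadcast vertices, so that each local push that eliminates one interior broadcaster without decreasing weight immediately contradicts minimality.
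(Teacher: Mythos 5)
Your core idea---push the power of an interior broadcaster $v$ out to a leaf $l$ on its own branch, gaining at least $d(v,l)\geq 1$ in weight---is exactly the paper's move, but your execution has two genuine problems. First, your independence check in the ``local'' case is incomplete: you assert that the new ball at $l$ is harmless because it lies in the union of $v$'s old ball with the segment toward $l$, but that segment may itself contain a broadcast vertex, which would then hear $l$ under the new broadcast. The paper avoids this by choosing the pair $(v,l)$ so that the unique $(l,v)$-path contains no broadcast vertices other than possibly $l$ and $v$ (such a pair always exists: take the interior broadcaster nearest to the leaf end of its branch). Your ``minimize the number of interior broadcasters'' refinement does not repair this.

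Second, and more seriously, the case you flag as the hard one ($f(v)\geq d(v,u)$, so the ball spills past the branch vertex) is never actually proved: you abandon the push on the grounds that domination of other branches would be lost, and substitute a redistribution over several leaves with ``a carefully chosen value'' and an unspecified counting inequality. That worry is based on a false premise---domination is not a constraint on independent broadcasts, and in any case the relocated ball $N_{f(v)+d(v,l)}[l]$ \emph{contains} the old ball $N_{f(v)}[v]$, so nothing is lost. The case split is unnecessary: since $l$ is a leaf, every other broadcast vertex $w$ satisfies $d(l,w)=d(l,v)+d(v,w)$, and independence of $f$ gives $d(l,v)\geq f(l)+1$ and $d(v,w)\geq f(v)+1$, hence $d(l,w)>f(l)+f(v)+1$, which is all one needs whether or not the old ball crossed $u$. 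This single distance computation, which is the paper's entire argument, is the observation your proposal is missing; without it your second case remains an unfinished sketch rather than a proof.
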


\begin{proof}
Suppose to the contrary that $f$ is an $\alpha_b$-broadcast and $f(v) > 0$ for some internal vertex $v$.  Choose $v$ such that there is a leaf $l$ where (i) $l$ and $v$ belong to the same branch and (ii) the unique $(l,v)$-path does not contain any broadcast vertices other than $l$ or $v$.  Define
\[ g(x)=
    \begin{dcases}
        f(v) + f(l)+1 & \mbox{ if } x=l \\
        0 & \mbox{ if } x=v \\
        f(x) & \mbox{ otherwise.} \\
    \end{dcases}
\]
(Note $f(l) = 0$ is possible.)
Let $f(w) > 0$ where $w$ is not $l$ or $v$.  By our choice of $v$, the unique $(l,w)$-path contains $v$ as an internal vertex.  By the independence of $f$, $v$ does not hear $l$ and $w$ does not hear $v$, giving 
\[
d(l,w) = d(l,v)+d(v,w) > f(l) + f(v) + 1 = g(l).
\]
Hence $g$ is independent and $f(V) < g(V)$, a contradiction.
\end{proof}

\begin{lem}\label{lem:spiderleavesbroad}
Let $T$ be a spider with the branch vertex $u$ and let $L = \{l_1, \dots, l_k\}$ be the set of leaves of $T$, ordered by non-decreasing distance from $u$. Then there is an $\alpha_b$-broadcast $f$ and some $t$, $1 \leq t \leq k$, such that $f(l_i) > 0$ for $t \leq i \leq k$ and otherwise $f(v) = 0$ for all other vertices $v$. Moreover, if two leaves have the same distance to $u$ and both are broadcasting, then all leaves at that distance from $u$ are broadcasting.
\end{lem}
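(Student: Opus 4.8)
The plan is to prove both assertions by extremal exchange arguments on $\alpha_b$-broadcasts. By Lemma~\ref{lem:spiderOnlyleaves} every $\alpha_b$-broadcast is supported on leaves, so I may work entirely with the values $f(l_1),\dots,f(l_k)$. The one structural fact I would exploit repeatedly is that the unique path between two leaves $l_a,l_b$ runs through $u$, so $d(l_a,l_b)=d_a+d_b$; consequently independence between two broadcasting leaves is \emph{exactly} the condition $\max\{f(l_a),f(l_b)\}<d_a+d_b$. Combined with the monotonicity of leaf eccentricity ($d_i\ge d_j$ implies $\mathrm{ecc}(l_i)\ge \mathrm{ecc}(l_j)$, a routine case check), this makes relocating broadcast power to a farther leaf harmless.

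For the suffix claim I would choose, among all $\alpha_b$-broadcasts, one $f$ maximizing the index sum $\sum_{i\,:\,f(l_i)>0} i$. If the broadcasting leaves do not form a suffix $\{l_t,\dots,l_k\}$, then there is a broadcasting leaf $l_j$ and a non-broadcasting leaf $l_i$ with $j<i$, hence $d_j\le d_i$. I would transfer the power by setting $g(l_i)=f(l_j)$, $g(l_j)=0$, and leaving all other values fixed. Monotonicity of eccentricity makes $g$ a legitimate broadcast, and since $d_i\ge d_j$ every pairwise distance from $l_i$ dominates the corresponding distance from $l_j$: for any other broadcasting leaf $l_c$, $d_i+d_c\ge d_j+d_c>\max\{f(l_j),f(l_c)\}$, so independence is inherited from $f$. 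Thus $g$ is an $\alpha_b$-broadcast whose index sum exceeds that of $f$ by $i-j>0$, contradicting the choice of $f$. Hence the broadcasting leaves of $f$ are a suffix.

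For the ``moreover'' claim I would show it holds for \emph{every} $\alpha_b$-broadcast, so in particular for the $f$ built above. Suppose leaves $l_p,l_q$ lie at a common distance $d$ and both broadcast, while some leaf $l_r$ at the same distance $d$ does not. Mutual independence of $l_p,l_q$ forces $f(l_p),f(l_q)<2d$. Setting $g(l_r)=1$ and leaving everything else unchanged gives an independent broadcast: $l_r$ has the same distance $d$ as $l_p$ but power $1\le f(l_p)$, so for every other broadcasting leaf $l_c$ we get $d_r+d_c=d+d_c>f(l_c)$, while $2d>\max\{1,f(l_p)\}$ and $2d>\max\{1,f(l_q)\}$, and no leaf hears $l_r$ since $d_r+d_c\ge 2>1$. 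But then $g(V)=f(V)+1$, contradicting optimality, so no such $l_r$ exists. This is consistent with the suffix structure: a suffix may ``cut'' a distance level only when it leaves a single broadcasting leaf there (the topmost), which is not excluded by the ``two implies all'' statement.

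The crux that makes both arguments work — and the step I would highlight as the main point rather than an obstacle — is the observation that moving broadcast power to a leaf at equal or greater distance from $u$ never shrinks a leaf-to-leaf distance, so independence is preserved automatically. Once that is isolated, the remaining work (checking the eccentricity/validity constraint and the short list of pairwise independence inequalities) is routine.
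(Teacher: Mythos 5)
Your proof is correct and takes essentially the same route as the paper: an exchange argument that shifts broadcast power from a nearer broadcasting leaf to a farther non-broadcasting one (using $d(l_a,l_b)=d_a+d_b$ to preserve independence), plus the observation that optimality/domination forces every leaf at a shared distance to broadcast once two of them do. Your extremal choice of maximizing the index sum is a slightly tidier way to guarantee the exchange process terminates than the paper's ``continuing in this way,'' but the underlying ideas coincide.
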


\begin{proof}
Let $f$ be an $\alpha_b$-broadcast. By Lemma~\ref{lem:spiderOnlyleaves}, only leaves broadcast. If $f$ does not have the form described in the statement, then there are leaves $l_i$ and $l_j$ such that $i < j$, $f(l_i)>0$, $f(l_j) = 0$. 

If $d(l_i,u) < d(l_j,u)$, we can set $f(l_j) = f(l_i)+1$ and $f(l_i)=0$, maintaining the independence of $f$ while increasing the total weight, a contradiction. Thus $d(l_i,u) = d(l_j,u)$. We set $f(l_j) = f(l_i)$ and $f(l_i)=0$. Continuing in this way, we can change $f$ to have the desired form.

Now, suppose there are leaves $l_i, l_{i+1}$ and $l_m$ such that $d(l_i,u) = d(l_{i+1},u) = d(l_m, u)$. Further, suppose $l_i, l_{i+1} \in V_f^{+}$. By independence, $l_i$ does not hear $l_{i+1}$, so $l_m$ does not hear $l_{i+1}$ or $l_i$. As $l_m$ must hear some leaf, it is easy to see the only possibility is that $l_m$ hears itself.

\end{proof}

\subsection{Determining $\alpha_b$ in Spiders}

Let $S(d_1, \dots, d_k)$ be a spider. There is an $\alpha_b$-broadcast of the form described by Lemma ~\ref{lem:spiderleavesbroad}. The following proposition allows us to find such an optimal broadcast.
\begin{prop}\label{prop:spiderbound}
    Let $t$ be a fixed integer $1 \leq t \leq k$ and let $f_t$ be a broadcast on $S(d_1, \dots, d_k)$ such that $V_{f_t}^+ = \{ l_t, \dots, l_k \}$.  Then
    \[
    f_t(V) \leq (d_t+d_{t+1}-1) + \sum_{j=t+1}^k (d_t + d_j -1).
    \]
\end{prop}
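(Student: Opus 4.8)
The plan is to bound each broadcasting value $f_t(l_i)$ separately using only the independence of $f_t$, and then add these bounds together. Since $V_{f_t}^+ = \{l_t, \dots, l_k\}$ and no other vertex broadcasts, we have $f_t(V) = \sum_{i=t}^k f_t(l_i)$, so it suffices to control each of these $k-t+1$ values and sum.

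The key geometric observation is that any two distinct leaves $l_i, l_j$ lie on different branches, so the unique $l_i$--$l_j$ path passes through the branch vertex $u$, giving $d(l_i,l_j) = d_i + d_j$. Because $f_t$ is independent, no broadcasting leaf $l_j$ can hear the broadcasting leaf $l_i$; that is, $d(l_i,l_j) > f_t(l_i)$, so
\[
f_t(l_i) \le d_i + d_j - 1 \qquad \text{for every broadcasting } l_j \neq l_i.
\]
First I would extract, for each $i$, the strongest such inequality. Since the distances are sorted as $d_1 \le \cdots \le d_k$, the smallest value of $d_j$ available among the broadcasting leaves $\{l_t,\dots,l_k\}\setminus\{l_i\}$ is $d_t$ when $i > t$, and $d_{t+1}$ when $i = t$ (since $l_t$ itself is excluded). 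This yields
\[
f_t(l_t) \le d_t + d_{t+1} - 1 \qquad \text{and} \qquad f_t(l_i) \le d_t + d_i - 1 \ \text{ for } t < i \le k.
\]
Summing over $i = t, \dots, k$ and relabelling the index in the second group as $j$ produces exactly
\[
f_t(V) \le (d_t + d_{t+1} - 1) + \sum_{j=t+1}^{k} (d_t + d_j - 1),
\]
which is the claimed bound.

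I expect the argument to be essentially routine, with only two points needing care. The first is confirming that, thanks to the ordering of the $d_i$, the shortest branch $l_t$ genuinely supplies the binding constraint for every $i > t$, so that no separate eccentricity restriction is required; indeed $\mathrm{ecc}(l_i) = d_i + d_k \ge d_i + d_t$ (and $\mathrm{ecc}(l_k) = d_k + d_{k-1} \ge d_k + d_t$ when $t \le k-1$), so the independence inequality is always the tighter one and the resulting value is a legitimate broadcast. The second, and the main obstacle to a fully uniform statement, is the degenerate case $t = k$: here $V_{f_t}^+$ is the single leaf $l_k$, independence imposes no constraint at all, and the term $d_{t+1} = d_{k+1}$ in the formula is undefined. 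This case must be handled separately via the eccentricity bound $f_t(l_k) \le \mathrm{ecc}(l_k) = d_k + d_{k-1}$, so I would either restrict the proposition to $t < k$ or record the convention under which the stated formula is read when only one leaf broadcasts.
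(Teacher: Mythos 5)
Your argument is correct and is essentially the paper's own proof: both use $d(l_i,l_j)=d_i+d_j$ together with independence to get $f_t(l_j)\le d_t+d_j-1$ for $j>t$ and $f_t(l_t)\le d_t+d_{t+1}-1$ from the nearest broadcasting leaf $l_{t+1}$, then sum. Your observation that the case $t=k$ leaves $d_{k+1}$ undefined is a fair point the paper glosses over, but it does not change the substance of the argument.
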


\begin{proof}
    The distance between broadcasting vertices $l_t, l_j$, $t < j$, is given by $d(l_t,l_j) = d_t + d_j$. Thus, $f_t(l_j) \leq d_t + d_j - 1$. As the closest broadcast leaf to $l_t$ is $l_{t+1}$, $f_t(l_t) \leq d_t + d_{t+1}-1$.
\end{proof}
By choosing $f_t$ to have each leaf broadcast at its maximum allowed power, the computation of $\alpha_b(S(d_1, \dots, d_k))$ is now immediate.
\begin{cor}
    For the spider $S(d_1, \dots, d_k)$,
    \[
    \alpha_b = \max_{1 \leq t \leq k}  \left[ (d_t+d_{t+1}-1) + \sum_{j=t+1}^k (d_t + d_j -1) \right].
    \]
\end{cor}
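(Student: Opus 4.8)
The plan is to establish both inequalities $\alpha_b \le M$ and $\alpha_b \ge M$, where
\[
M := \max_{1 \le t \le k}\left[(d_t+d_{t+1}-1) + \sum_{j=t+1}^{k}(d_t+d_j-1)\right].
\]
For the upper bound I would apply Lemma~\ref{lem:spiderleavesbroad} to obtain an $\alpha_b$-broadcast $f$ whose broadcasting set is exactly $\{l_t,\dots,l_k\}$ for some $t$. Such an $f$ is a broadcast $f_t$ in the sense of Proposition~\ref{prop:spiderbound}, so $\alpha_b = f(V) \le (d_t+d_{t+1}-1)+\sum_{j=t+1}^{k}(d_t+d_j-1) \le M$.

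For the lower bound it suffices to realise each bracketed quantity by an explicit independent broadcast. For a fixed $t$ I would define $g_t$ by $g_t(l_t)=d_t+d_{t+1}-1$, by $g_t(l_j)=d_t+d_j-1$ for $t<j\le k$, and by $g_t(v)=0$ otherwise; by construction $g_t(V)$ equals the bracketed quantity for $t$. First I would check that $g_t$ is a legitimate broadcast, i.e.\ each assigned power is at most the relevant eccentricity. Using $\mathrm{ecc}(l_j)=d_j+d_k$ for $j<k$ and $\mathrm{ecc}(l_k)=d_k+d_{k-1}$, every such inequality collapses to the ordering $d_t\le d_{t+1}\le\cdots\le d_k$. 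Granting that $g_t$ is independent, we then obtain $\alpha_b\ge g_t(V)$ for every $t$, hence $\alpha_b\ge M$, and the two bounds match.

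The heart of the argument is verifying that $g_t$ is independent. Since a pair $l_i,l_j$ fails independence precisely when $d(l_i,l_j)\le\max\bigl(g_t(l_i),g_t(l_j)\bigr)$, and since $d(l_i,l_j)=d_i+d_j$ in a spider, it is enough to prove $d_i+d_j>\max\bigl(g_t(l_i),g_t(l_j)\bigr)$ for all $t\le i<j\le k$. The governing idea is that every power is calibrated against $l_t$, the broadcasting leaf closest to all the others: for $i=t$ the inequality reduces to $d_j\ge d_{t+1}$, while for $i>t$ it reduces to $d_i\ge d_t$, both immediate from the monotonicity of the $d_i$. I expect this monotonicity bookkeeping to be the only genuine work, with no deeper obstacle lurking. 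The one value needing separate comment is the degenerate $t=k$, where $l_k$ is the sole broadcasting leaf and the term $d_{t+1}$ is vacuous; here I would observe that a single broadcasting leaf is never optimal when $k\ge 3$, since the choice $t=k-1$ already attains at least as much weight, so $t=k$ may be dropped from the maximum without altering $M$.
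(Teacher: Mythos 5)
Your proposal is correct and follows the paper's own route exactly: the upper bound via Lemma~\ref{lem:spiderleavesbroad} and Proposition~\ref{prop:spiderbound}, and the lower bound by letting each broadcasting leaf $l_j$ use its maximum allowed power $d_t+d_j-1$ (the paper compresses this into one sentence, whereas you supply the independence and eccentricity checks it leaves implicit). Your handling of the degenerate $t=k$ term is a small but genuine improvement in care over the paper's statement.
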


\section{Broadcast packings and multicovers}\label{packing}

As stated in the introduction, a broadcast is a collection of neighbourhood balls.  A dominating broadcast is a covering, and an independent broadcast is a packing where each broadcast vertex hears only itself.  If we strengthen the constraint that no vertex hears more than one broadcasting vertex, we have a packing.
The concepts of packing and covering are well studied.  Indeed the optimization version of these problems can be expressed (as dual) linear programs.  See~\cite{Farber1984,Lubiw1987} for early work, and~\cite{Brewster2013} for the specific application to broadcast domination where the dual notion of \emph{multipacking} is defined.  Here we express $P_b(G)$ using linear programming and extract its dual which we call the \emph{multicover} problem.

Given a graph $G=(V,E)$ for each vertex $v \in V$ and for each $k$, $1 \leq k \leq \mathrm{ecc}_G(v)$, we introduce a binary variable $x_{v,k}$.  If $x_{v,k}=1$, then the ball of radius $k$ about $v$ is in the packing. The broadcast packing number is the maximum weight broadcast subject to the constraint that each vertex hears at most one broadcast vertex.  In symbols, 

\begin{equation}\label{eqn:Pb}
\begin{matrix}
P_b(G) & = & \displaystyle \max \sum_{v \in V} \sum_{k=1}^{\mathrm{ecc}_G(v)} k \cdot x_{v,k} & \\[20pt]
  & s.t. & \displaystyle \sum_{d(u,v) \leq k} x_{v,k} \leq 1 & \mbox{ for each } u \in V \\[20pt]
   & & x_{v,k} \in \{ 0, 1 \}
\end{matrix}   
\end{equation}

To define the dual problem, \emph{multicover}, we introduce a binary variable $y_u$ for each $u \in V$. When $y_u = 1$ we will say we have placed a \emph{token} on vertex $u$. The objective in multicover is to minimize the sum of the $y_u$ such that each ball of radius $k$ has at least $k$ tokens on its vertices.  In symbols,

\begin{equation}\label{eqn:Mc}
\begin{matrix}
M_c(G) & = & \displaystyle \min \sum_{u \in V} y_u & \\[20pt]
  & s.t. & \displaystyle \sum_{d(u,v) \leq k} y_u \geq k & \mbox{ for each } v \in V \mbox{ and } 1 \leq k \leq e_c(v)\\[20pt]
   & & y_u \in \{ 0, 1 \}
\end{matrix}   
\end{equation}

As these are dual programs, we have $P_b(G) \leq M_c(G)$ (and equality in the case that we consider the fractional relaxation of both programs).  As a result, if we can find a feasible broadcast packing and a feasible multicover of the same weight, then both must be optimal. By the work in~\cite{Farber1984,Lubiw1987} these programs have integer optima (with equal objective functions) for strongly chordal graphs. In particular, trees are strongly chordal so we can compute efficiently $P_b(T)=M_c(T)$ for any tree $T$.  Below we go further and give actual formulas for special classes of trees.

Before studying specific classes, we make the following observation used for verifying the multicover constraints of Eq.~\eqref{eqn:Mc}.

\begin{obs}
Let $T$ be a rooted tree where all leaves are on the same level. Suppose $l$ is a leaf and $v$ is an ancestor of $l$. Then for any $r \geq d(l,v)$, $N_r[l] \subseteq N_r[v]$.
\end{obs}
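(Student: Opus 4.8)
The plan is to prove the containment $N_r[l] \subseteq N_r[v]$ directly by verifying that every vertex within distance $r$ of the leaf $l$ is also within distance $r$ of the ancestor $v$. The key structural fact I would exploit is that in a tree, the unique path between any two vertices is determined, and since $v$ lies on the path from $l$ toward the root, $v$ sits "between" $l$ and much of the rest of the tree. Concretely, let $i = d(l,v)$ and take an arbitrary vertex $w \in N_r[l]$, so $d(l,w) \leq r$. I want to show $d(v,w) \leq r$.

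\begin{proof}
Let $i = d(l,v)$ and let $w \in N_r[l]$, so that $d(l,w) \leq r$. We consider the unique $(l,w)$-path $P$ in $T$. Since $v$ is an ancestor of $l$, the path from $l$ to the root passes through $v$; in particular, the initial segment of the path from $l$ leading upward reaches $v$ after exactly $i$ steps. There are two cases. If $v$ lies on the path $P$, then $d(v,w) = d(l,w) - d(l,v) \leq r - i \leq r$, so $w \in N_r[v]$. Otherwise, $v$ does not lie on $P$, which forces the $(l,w)$-path to branch away from the root before reaching $v$; hence $w$ lies in the subtree rooted at $l$'s side below the point where $P$ departs, and $d(v,w) = d(v,l) + d(l,w)$ only in degenerate configurations. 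More carefully, let $x$ be the vertex where the $(v,w)$-path and the $(l,w)$-path first meet. Then $d(v,w) = d(v,x) + d(x,w)$ and $d(l,w) = d(l,x) + d(x,w)$. Since $x$ lies on the $(l,v)$-path (as $T$ is a tree and $v$ is an ancestor of $l$), we have $d(v,x) \leq d(v,l) = i \leq d(l,x) + i$; combined with $r \geq i$ and $d(l,w) \leq r$, a short estimate gives $d(v,w) = d(v,x) + d(x,w) \leq d(l,x) + d(x,w) = d(l,w) \leq r$, using that $x$ is no farther from $v$ than from $l$ because $v$ is the ancestor. Thus $w \in N_r[v]$ in either case, establishing $N_r[l] \subseteq N_r[v]$.
\end{proof}

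The main obstacle I anticipate is the case analysis around the meeting vertex $x$: I must argue cleanly that because $v$ is an ancestor of $l$, the branch point $x$ between $v$ and $w$ satisfies $d(v,x) \leq d(l,x)$, which is what drives the inequality. The cleanest route may be to avoid the explicit branch-point argument altogether and instead note that any vertex $w$ either lies in the subtree rooted at $v$ (in which case $d(v,w) \leq d(l,w) \leq r$ since paths from $l$ into that subtree pass through $v$ only when $w$ is "above," requiring care) or lies outside it (in which case the $(l,w)$-path necessarily traverses $v$, giving $d(v,w) = d(l,w) - i \leq r$). I would prefer this dichotomy based on whether $w$ is in the subtree rooted at $v$, as it mirrors the subtree arguments used throughout the paper and sidesteps metric estimates; the hypothesis $r \geq d(l,v)$ is then used precisely to absorb the worst case where $w$ sits on the far side.
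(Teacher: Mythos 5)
The paper states this Observation without proof, so your attempt stands or falls on its own, and as written it has a genuine gap in the second case. When $v$ does not lie on the $(l,w)$-path, you let $x$ be the branch point and assert $d(v,x)\leq d(l,x)$ ``because $v$ is the ancestor,'' which is what drives your estimate $d(v,w)\leq d(l,w)$. That inequality is false in general: if $x$ is the parent of $l$ (say $w$ is a sibling leaf of $l$), then $d(l,x)=1$ while $d(v,x)=d(l,v)-1$, which can be arbitrarily large. The chain you actually justify, $d(v,x)\leq i\leq d(l,x)+i$, only yields $d(v,w)\leq d(l,w)+i$, not $d(v,w)\leq r$. The same flaw appears in your proposed alternative dichotomy, where you claim $d(v,w)\leq d(l,w)$ for $w$ in the subtree rooted at $v$ and yourself flag it as ``requiring care.'' The telltale sign is that your argument never invokes the hypothesis that all leaves are on the same level --- and without that hypothesis the statement is false (attach a long pendant path to the parent of $l$: vertices on it at distance $r=d(l,v)$ from $l$ need not be within $r$ of $v$).

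The fix is to use the equal-depth hypothesis exactly where your estimate breaks. Let $i=d(l,v)$ and let $T_v$ be the subtree rooted at $v$. Since every leaf is at the same depth as $l$, the subtree $T_v$ has height $i$, so \emph{every} vertex of $T_v$ satisfies $d(v,w)\leq i\leq r$; in particular all of $T_v$ lies in $N_r[v]$, with no need to compare $d(v,w)$ to $d(l,w)$ at all. If instead $w\notin V(T_v)$, then the $(l,w)$-path must exit $T_v$ through $v$, so $d(v,w)=d(l,w)-i\leq r-i\leq r$, which is your (correct) first case. Your overall two-case skeleton is the right one; the missing idea is that the in-subtree case is handled by bounding $d(v,w)$ by the height of $T_v$ rather than by $d(l,w)$.
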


\subsection{Perfect binary trees}

We begin by describing an optimal multicover for the perfect binary tree $T_h$.

\begin{thm}\label{thm:multicover}
Let $T_{h}$ be a perfect binary tree of height $h = 3m+t \geq 4$ where $0 \leq t \leq 2$. Define $S\subseteq V(T_{h})$ by:
\begin{enumerate}
    \item all vertices at height $h-1$ belong to $S$,
    \item all vertices at height $h-2, h-5, \dots, 1+t$ belong to $S$,
    \item the root belongs to $S$ when $t=0$ or $t=1$, whereas the two vertices at level $1$ belong to $S$ when $t=2$.
\end{enumerate}
The set $S$ is a multicover of $T_h$.
\end{thm}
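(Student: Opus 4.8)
I need to show that $S$ satisfies the defining inequality of Eq.~\eqref{eqn:Mc}: for every vertex $v$ and every radius $k$ with $1 \leq k \leq \mathrm{ecc}_{T_h}(v)$, the ball $N_k[v]$ contains at least $k$ tokens of $S$. Since we only need a feasibility proof (optimality presumably comes in a companion theorem exhibiting a packing of equal weight), the entire burden is counting tokens inside balls. The key simplification is the preceding observation: if $v$ is an ancestor of a leaf $l$ with $d(l,v) \leq r$, then $N_r[l] \subseteq N_r[v]$, so $N_r[v]$ contains at least as many tokens as $N_r[l]$. Because $T_h$ is perfect (vertex-transitive within each level), this lets me reduce the verification of all balls to balls centred at leaves: for any internal vertex $v$ and radius $k$, I pick a descendant leaf $l$ and compare $N_k[v]$ with $N_{k}[l']$ for a suitable leaf, reducing to the leaf case.

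\textbf{First I would set up the counting for a leaf.} Let $l$ be any leaf and let $\ell_j$ denote the token-carrying levels of $S$, namely level $h-1$, the arithmetic progression $h-2, h-5, \dots, 1+t$ (step $3$), and the root/level-$1$ vertices depending on $t$. For a ball $N_k[l]$ I would count, level by level, how many vertices of $S$ lie within distance $k$ of $l$. The crucial geometric fact in a perfect binary tree is that the vertices at height $j$ within distance $k$ of a fixed leaf $l$ (which sits at height $0$) are exactly those in the subtree rooted at the ancestor of $l$ at height $\lceil (k+j)/2 \rceil$ or similar; more precisely, a vertex $w$ at height $j$ satisfies $d(l,w) = j + 2\cdot(\text{extra descent})$, so the number of such $w$ at distance $\leq k$ grows in controlled powers of $2$. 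I would tabulate the contribution of each token-level to $|S \cap N_k[l]|$ as a function of $k$, then show the running total is always at least $k$.

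\textbf{The main work is making the level-contributions add up correctly across the three cases of $t$.} Because the token levels are spaced by $3$ and the ball radius increments by $1$, each time $k$ increases by roughly $3$ the ball reaches a new populated level contributing many new tokens, while the constraint $k$ grows only linearly; the delicate range is small $k$ (say $k \leq 3$ or so) near the leaves, where I must confirm that level $h-1$ and level $h-2$ (both populated) already supply enough tokens. Specifically for $k=1$ the ball around a leaf contains its parent at height $1$; I need a token within distance $1$, which level $h-1$ provides since the parent is at height $h-1$ when viewed from the leaf at height $0$ — wait, I must be careful: the parent of a leaf is at height $1$ in my height-from-leaves convention but the statement indexes heights from the leaves, so level $h-1$ means the parents of leaves. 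I would fix the height convention explicitly at the outset (heights measured so leaves are at height $0$ and the root at height $h$, matching ``all vertices at height $h-1$'') and re-derive which token is nearest each leaf.

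\textbf{I expect the $t$-dependent top levels to be the main obstacle.} The arithmetic progression of populated levels is chosen precisely so that consecutive gaps of $3$ keep the cumulative token count ahead of $k$; the top of the tree (the root when $t \in \{0,1\}$ versus the two level-$1$ vertices when $t=2$) is a boundary correction ensuring the largest balls — those of radius up to $\mathrm{diam}(T_h)=2h$ centred high in the tree — still meet the constraint. The hardest single inequality will be verifying the constraint for balls of radius $k$ just below a populated level, where the most recently added tokens are maximally far and the count is tightest; I would handle this by an induction on $k$ in blocks of three, using that passing each populated level multiplies the available token supply by a factor exceeding the linear demand. Throughout, the perfectness of the tree and the ancestor-observation keep the argument finite: I reduce to leaf-centred balls, verify those by the level-count, and invoke the observation to lift the conclusion to all vertices.
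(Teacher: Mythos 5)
There is a genuine gap in your reduction of internal vertices to leaves. The containment $N_r[l] \subseteq N_r[v]$ from the paper's observation requires $d(l,v) \leq r$, i.e.\ the ball around $v$ must be large enough to reach down to the leaf level. When $v$ is an internal vertex and $r$ is \emph{smaller} than the distance from $v$ to its nearest descendant leaf, the ball $N_r[v]$ contains no leaf whatsoever, and there is no leaf-centred ball of radius $r$ to compare it with. Your plan ``pick a descendant leaf $l$ and reduce to the leaf case'' simply does not apply to these balls, and they are precisely the hard ones: they cannot see the densely tokened levels $h-1$ and $h-2$ near the bottom, so the only tokens available are those on the arithmetic progression of levels $h-2, h-5, \dots, 1+t$ and the top correction. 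You have also misplaced the difficulty: you flag ``the largest balls --- those of radius up to $\mathrm{diam}(T_h)$ centred high in the tree'' as the obstacle, but those are easy (they reach the bottom two tokened levels and inherit the leaf count); the genuinely delicate case is \emph{small}-radius balls centred high in the tree.

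The paper handles exactly this missing case with a separate argument: for an internal vertex $v$ at distance $d$ from its nearest leaf and $4 \leq r < d$, the spacing-by-three of the tokened levels guarantees that within the subtree rooted at $v$ all vertices at one of the distances $r$, $r-1$, or $r-2$ from $v$ carry tokens, giving at least $2^{r-2} \geq r$ tokens in $N_r[v]$; the radii $r=1,2,3$ and the root are then checked by hand using the fact that no three consecutive levels are all token-free. Your proposal would need to add this entire branch of the argument (or something equivalent) to be complete. As a secondary remark, your leaf-ball analysis is more elaborate than necessary: the paper verifies the leaf constraints using only the tokens on levels $h-1$ and $h-2$, via the ancestor of $l$ at distance $\lceil r/2 \rceil$ and the inequality $2^{\lceil r/2 \rceil -1} + 2^{\lceil r/2 \rceil - 2} \geq r$, so the full level-by-level tabulation over all token levels is not needed there.
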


\begin{proof}
First we will show that the multicover constraints in Eq.\eqref{eqn:Mc} are satisfied for leaves. Let $l$ be a leaf and consider $N_r[l]$. Since all vertices on levels $h-1$ and $h-2$ have tokens, it is clear that $N_r[l]$ has $r$ tokens for $1 \leq r \leq 4$. Assume $r \geq 5$. Let $v$ be the ancestor of $l$ at distance $\lceil \frac{r}{2} \rceil$ and let $T'$ be the subtree rooted at $v$. The tokens on levels $h-1$ and level $h-2$ (of $T$) show $T'$ contains at least $2^{\lceil \frac{r}{2} \rceil -1} + 2^{\lceil \frac{r}{2} \rceil -2} \geq r$ tokens. As $V(T') \subseteq N_r[l]$, the multicover constraints hold for all leaves.

Now consider an internal vertex $v$, different from the root, at distance $d$ from the nearest leaf $l$.  For any $r \geq d$, we have $N_r[l] \subseteq N_r[v]$.  We have already established $N_r[l]$ contains at least $r$ tokens.  Thus consider $1 \leq r < d$.  If $r \geq 4$, then the construction of $S$ ensures that in the subtree rooted at $v$ all the vertices at one of the distances $r, r-1,$ or $r-2$ from $v$ have tokens.  Since $2^{r-2} \geq r$, the neighbourhood $N_r[v]$ contains $r$ tokens.  For $r=1$, there is never more than two consecutive levels without tokens, so $v$, its parent, or its children have tokens.  For $r=2$, $N_2[v]$ has 2 vertices at the same level as $v$ (including $v$), two children of $v$, and $4$ grandchildren of $v$ (recall $r < d$).  One of these three levels has at least 2 tokens.  For $r=3$, let $v$ be at level $i$.  Then in $N_3[v]$ there are at least $8$ vertices at level $i+3$, $4$ at level $i+2$, $4$ at level $i+1$.  At least one of these levels has tokens.

When $v$ is the root, a similar analysis shows $N_r[v]$ contains $r$ tokens.  For $r \geq 4$, we use $2^{r-2} \geq r$ as above.  For $1 \leq r \leq 3$, a case analysis based on $h \bmod{3}$ verifies the multicover conditions are met.
\end{proof}

Summing the number of elements in $S$ from above and recalling $M_c(G)$ is the weight of a minimum multicover, we obtain the following proposition.

\begin{prop}\label{prop:mcthupperbnd}
For $T_h$, $h = 3m+t \geq 4$, $0 \leq t \leq 2$, we have
\[
M_c(T_h) \leq 2^{t+1} \frac{8^{m}-1}{7} + 2^{h-1} + b_t
\]
where $b_0 = 1, b_1 = 1, b_2 = 2$.
\end{prop}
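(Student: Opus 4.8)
The plan is to exploit the fact that the set $S$ constructed in Theorem~\ref{thm:multicover} is a feasible multicover. Since $M_c(T_h)$ is by definition the minimum weight of a multicover and $S$ is feasible, we immediately get $M_c(T_h) \leq |S|$. Thus the entire task reduces to counting $|S|$ and checking that it equals $2^{t+1}\frac{8^m-1}{7} + 2^{h-1} + b_t$; no new feasibility argument is required, that work having been done in Theorem~\ref{thm:multicover}.

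First I would count the three groups of vertices in $S$ separately, after verifying they are disjoint. Items (1)--(3) select vertices lying on pairwise distinct levels: item (1) uses level $h-1$, item (2) uses levels in the range $[1+t,\,h-2]$, and item (3) uses either the root (level $0$, when $t\in\{0,1\}$) or level $1$ (when $t=2$). Since $1+t \geq 1 > 0$ and, in the $t=2$ case, $1+t = 3 > 1$, these ranges are mutually disjoint, so $|S|$ is the sum of the three cardinalities. Because the tree is perfect, each level $\ell$ contributes exactly $2^\ell$ vertices.

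The heart of the computation is item (2). Here I would observe that the chosen levels $h-2, h-5, \dots, 1+t$ form an arithmetic progression with common difference $3$, and verify via $h = 3m+t$ that it has exactly $m$ terms in all three residue classes. Writing the levels uniformly as $\ell = 3i + (1+t)$ for $i = 0, \dots, m-1$ (so the smallest is $1+t$ and the largest is $3(m-1)+1+t = h-2$), the vertex count collapses to the geometric sum $\sum_{i=0}^{m-1} 2^{3i+1+t} = 2^{t+1}\sum_{i=0}^{m-1} 8^i = 2^{t+1}\frac{8^m-1}{7}$. Item (1) contributes $2^{h-1}$, and item (3) contributes $1$ when $t\in\{0,1\}$ and $2$ when $t=2$, which is precisely $b_t$. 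Summing the three disjoint contributions yields the claimed bound.

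The only delicate point is the bookkeeping in item (2): confirming that the progression from $1+t$ up to $h-2$ really contains $m$ terms for each $t$ (the smallest admissible height varies by residue class, e.g.\ $t=0$ forces $m \geq 2$), and that the common factor $2^{t+1}$ emerges uniformly from the three starting levels $1,2,3$. Once the term count and the exponent shift are pinned down, the geometric series reduces to $\frac{8^m-1}{7}$ and the remainder is addition. I expect no genuine obstacle beyond this indexing, since the substantive ingredient—the feasibility of $S$—is already supplied by Theorem~\ref{thm:multicover}.
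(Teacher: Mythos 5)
Your proposal is correct and follows the same route as the paper: the paper derives this bound simply by summing the sizes of the three groups of vertices in the multicover $S$ of Theorem~\ref{thm:multicover} and invoking the minimality in the definition of $M_c$. Your level-by-level count, including the $m$-term geometric series $\sum_{i=0}^{m-1}2^{3i+1+t}=2^{t+1}\frac{8^m-1}{7}$, matches the intended computation exactly.
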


Next we define a packing broadcast for $T_h$ with the same weight as $M_c(T_h)$.  By duality of linear program this proves the optimality of both the multicover and the packing.

Consider the vertices at height $i$ and label them $v_1, v_2, \dots, v_{2^i}$ so that $v_{2j-1}$ and $v_{2j}$ have a common parent.  In this section, to \emph{broadcast at half the vertices at height $i$} means broadcast with power $1$ at vertices $v_1, v_3, \dots, v_{2^i-1}$ and do not broadcast at vertices $v_2, v_4, \dots, v_{2^i}$. 

\begin{thm}
For the perfect binary tree $T_h$, $h = 3m+t \geq 4$, $0 \leq t \leq 2$,
\[
P_b(T_h) = M_c(T_h) =  2^{t+1} \frac{8^{m}-1}{7} + 2^{h-1} + b_t
\]
where $b_0 = 1, b_1 = 1, b_2 = 2$.
\end{thm}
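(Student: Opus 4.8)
The plan is to prove equality by exhibiting a feasible packing broadcast whose weight equals the upper bound on $M_c(T_h)$ from Proposition~\ref{prop:mcthupperbnd}. Since Eq.~\eqref{eqn:Pb} and Eq.~\eqref{eqn:Mc} are dual linear programs, any feasible packing $f$ and any feasible multicover $S$ satisfy $f(V) \leq P_b(T_h) \leq M_c(T_h) \leq |S|$; the work in~\cite{Farber1984,Lubiw1987} guarantees the integer optima coincide. Thus if I can construct a packing broadcast of weight exactly $2^{t+1}\frac{8^m-1}{7} + 2^{h-1} + b_t$, then both inequalities collapse to equalities and the theorem follows. The multicover side is already done in Theorem~\ref{thm:multicover} and Proposition~\ref{prop:mcthupperbnd}, so the entire task reduces to the packing construction.

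First I would define the candidate packing broadcast $f$ that mirrors the token placement in $S$. The natural choice is to broadcast at half the leaves (at height $0$) with power $1$, reserving those unit balls to account for the $2^{h-1}$ term, and then to broadcast with power $1$ from half the vertices at the remaining token levels $h-2, h-5, \dots, 1+t$, together with the root (when $t=0,1$) or the two level-$1$ vertices (when $t=2$). Because $S$ was designed so that no two chosen levels are closer than three apart (except the top two adjacent leaf-levels, which here I replace by broadcasting only at leaves), the induced unit balls should be pairwise non-overlapping, and I would verify this by the same height-gap reasoning used in the multicover proof. The guiding principle from the binary-tree independence section applies verbatim: broadcasting at half the vertices of a level spaces the broadcasting vertices so their radius-$1$ balls are disjoint, and the three-level separation between distinct chosen levels keeps balls on different levels apart.

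Next I would verify that $f$ is a genuine packing, i.e.\ that for every $u \in V(T_h)$ we have $|H(u)| \leq 1$. Since every broadcasting vertex has power $1$, the ball it covers is just itself and its immediate neighbours, so I must check that no vertex lies within distance $1$ of two distinct broadcasting vertices. Within a single chosen level, broadcasting at half the vertices (the $v_{2j-1}$) guarantees the unit balls are disjoint because $v_{2j-1}$ and $v_{2j'-1}$ are at distance at least $2$ for $j \neq j'$ when they share no parent, and the only shared-parent pair has one silent member. Across two chosen levels at distance at least $3$, the unit balls cannot meet. The only delicate spots are the interface between the leaf level and the next token level $h-2$, and the top of the tree where the root or level-$1$ vertices broadcast; here a short case analysis on $h \bmod 3$ (paralleling the endgame of Theorem~\ref{thm:multicover}) confirms no overlaps.

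Finally I would compute $f(V)$ and confirm it equals the stated formula. The leaf contribution is $2^{h-1}$ (half of the $2^h$ leaves, each with power $1$), the contributions from the internal token levels $h-2, h-5, \dots$ form the geometric sum producing $2^{t+1}\frac{8^m-1}{7}$, and the top-level correction contributes $b_t$. The main obstacle will be the boundary bookkeeping: ensuring the half-level broadcasting at adjacent-in-geometry levels and the special top configuration for each residue $t$ genuinely avoids double-covering while producing exactly the claimed weight, rather than off-by-one errors in the small cases. Once the counts match the multicover bound, duality forces $P_b(T_h) = M_c(T_h)$ and both equal the formula, completing the proof.
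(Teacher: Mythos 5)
Your overall strategy --- exhibit a feasible packing broadcast whose weight meets the multicover upper bound of Proposition~\ref{prop:mcthupperbnd} and invoke LP duality --- is exactly the paper's, but your candidate broadcast fails on both counts. First, it is not a packing. Every vertex $p$ at level $h-1$ (a parent of leaves) has exactly one broadcasting child, since you broadcast at half the leaves, so $p$ hears that leaf; but $p$'s own parent $q$ sits at level $h-2$, one of your chosen internal levels, and whenever $q$ belongs to the broadcasting half of that level, $p$ also hears $q$. Thus half of the level-$(h-1)$ vertices hear two broadcasters, and no choice of ``halves'' avoids this, because every level-$(h-1)$ vertex already hears a leaf. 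Second, even setting feasibility aside, the weight is too small: broadcasting with power $1$ at half the vertices of level $i$ contributes only $2^{i-1}$, so your total is $2^{h-1} + 2^{h-3} + 2^{h-6} + \cdots + 2^{t}$ plus a bounded correction, whereas the multicover bound is $2^{h-1} + 2^{h-2} + 2^{h-5} + \cdots + 2^{t+1} + b_t$. You are short by $2^{t}(8^m-1)/7$, so the duality argument cannot close, and the entire proof collapses at the step ``the counts match the multicover bound.''

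The missing idea is that the packing must use powers larger than $1$ in order to absorb the two adjacent token levels $h-1$ and $h-2$ near the bottom of the tree. The paper's construction takes, in each subtree rooted at a level-$(h-2)$ vertex (four leaves), one leaf broadcasting with power $2$ and one with power $1$, for total weight $3 \cdot 2^{h-2} = 2^{h-1} + 2^{h-2}$; the internal power-$1$ broadcasts then occur at half the vertices of levels $h-4, h-7, \dots$ (shifted down from your $h-2, h-5, \dots$, which restores disjointness of the balls), and the root broadcasts with power $p_t \in \{3,1,2\}$ rather than merely matching the token count $b_t$. With that arrangement the balls are pairwise disjoint and the weight equals the multicover bound, which is what the duality argument actually requires.
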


\begin{proof}

We define a broadcast $f$ as follows. For each vertex $v$ at height $h-2$, let $l_1, l_2, l_3, l_4$ be the four leaves in the subtree rooted at $v$ with $l_1, l_2$ sharing a parent, and $l_3, l_4$ sharing a parent.  Set $f(l_1) = 2$, $f(l_3) = 1$ and $f(l_2) = f(l_4) = 0 $.

For heights $h-4, h-7, \dots, c_t$ where $c_0 = 5, c_1 = 3,$ and $c_2 = 4$, $f$ broadcasts at half the vertices at each of these heights. Finally, $f$ broadcasts with power $p_t$ at the root where $p_0 = 3, p_1 = 1,$ and $p_2 = 2$. It is easy to verify that $f$ is a packing broadcast. 

The weight of $f$ is determined as follows. Let $v$ be a vertex at height $h-2$.  In the subtree rooted at $v$, there is a leaf of weight $2$ and a leaf of weight $1$ giving a total weight for all such vertices of $2^{h-1}+ 2^{h-2}$. There are broadcast vertices of weight $1$ at half the vertices on levels $h-4, h-7, \dots, c_t$ giving a total weight of $2^{h-5}+2^{h-8} + \cdots + 2^{c_t-1}$.  Finally the root broadcasts with power $p_t$.  It is straightforward to simplify these sums to:
\[
    \begin{dcases}
        2 \cdot \frac{8^{m}-1}{7} + 2^{h-1} + 1 & t=0 \\
        4 \cdot \frac{8^{m}-1}{7} + 2^{h-1} + 1 & t=1 \\
        \frac{8^{m+1}-1}{7} + 2^{h-1} + 1 & t=2. \\
    \end{dcases}
\]
We have a multicover and a packing broadcast with the same total costs. The result follows.
\end{proof}

\subsection{Perfect $k$-ary trees}

The following result is the $k$-ary analogue of Theorem~\ref{thm:multicover}. (Recall we required $k \geq 3$.) We omit the proof as it is similar to the proof of Theorem~\ref{thm:multicover}.

\begin{thm}\label{thm:karymulticover}
   Let $T^k_{h}$ be the perfect $k$-ary tree of height $h =3m + t \geq 3$, $0 \leq t \leq 2$. Define the set $S \subseteq V(T_h^k)$  as follows: 
   \begin{enumerate}
    \item all vertices at height $h-1$ belong to $S$.
    \item all vertices at height $h-2, h-5, \dots, 1+t$
    \item the root belongs to $S$ when $t=1$, whereas the $k$ vertices at level $1$ belong to $S$ when $t=2$
\end{enumerate}
The set $S$ is a multicover of $T_h^k$.
\end{thm}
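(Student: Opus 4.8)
The plan is to verify directly that $S$ satisfies every multicover constraint of Eq.~\eqref{eqn:Mc}, namely $|N_r[v]\cap S|\ge r$ for each vertex $v$ and each $r$ with $1\le r\le \mathrm{ecc}(v)$, following the three-case structure of the proof of Theorem~\ref{thm:multicover}: leaves first, then internal non-root vertices, then the root. The engine of the whole argument is the inequality $k^{\lceil r/2\rceil-1}\ge r$ (and its internal cousin $k^{r-2}\ge r$), valid for $k\ge 3$ once $r$ is not too small. This is precisely why a single token level per period of $3$ suffices here, whereas the binary proof needed two adjacent token levels near the leaves.

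For a leaf $l$ I would bound $|N_r[l]\cap S|$ from below by counting only the tokens on level $h-1$ (the parents of leaves), and, for two exceptional radii, also those on level $h-2$. Letting $a$ be the ancestor of $l$ at distance $\lceil r/2\rceil$, a short distance computation shows the level-$(h-1)$ vertices within distance $r$ of $l$ are exactly those in the subtree rooted at $a$, of which there are $k^{\lceil r/2\rceil-1}$. Since $k^{\lceil r/2\rceil-1}\ge r$ holds for $k\ge 3$ at every $r$ except $r\in\{2,4\}$, and those two radii are cleared by adjoining the level-$(h-2)$ tokens (giving totals $1+1$ and $k+k$ respectively), all leaf constraints hold.

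For an internal vertex $v$ other than the root, let $d$ be its distance to a nearest descendant leaf $l$. For $r\ge d$ the containment $N_r[l]\subseteq N_r[v]$ reduces the constraint to the already-settled leaf case. For $r<d$ the subtree rooted at $v$ is deep enough that all descendant depths up to $r$ are fully populated; since the token levels are spaced exactly $3$ apart, one of the three consecutive depths $r-2,r-1,r$ is a token level, contributing $k^{j}\ge k^{r-2}\ge r$ tokens whenever $r\ge 3$. The radii $r=1,2$ I would dispose of using that at most two consecutive levels are token-free, so that $v$ together with its siblings (all carrying tokens when $v$'s own level does), or its children, or its grandchildren, already supply at least $k\ge 3$ tokens. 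The root is handled by the same depth-counting, with the small radii governed by condition~(3): for $t=1$ the root token covers $r=1$; for $t=2$ the $k$ level-$1$ vertices cover $r=1,2$; and for $t=0$ no extra token is needed because level $1$ already belongs to the periodic family.

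The main obstacle is the bookkeeping at the boundary near the root, where the periodic family of token levels terminates at level $1+t$ so that the descendant-counting can no longer reach a deeper token level. This forces a residue-by-residue check ($t=0,1,2$) of the small-radius constraints at the root and the first few levels, which is exactly what condition~(3) is designed to make work; the one genuinely delicate point is confirming the congruences mod $3$ so that the relevant token level falls inside each ball $N_r[v]$ (for instance, checking that the level $\equiv h-2 \pmod 3$ inside $\{L+r-2,L+r-1,L+r\}$ always lands in the patterned range $[1+t,\,h-2]$ rather than below it). Away from the root the $k\ge 3$ branching renders a single token level per period more than sufficient, so the two-level near-leaf accounting of the binary proof collapses to the single inequality $k^{\lceil r/2\rceil-1}\ge r$, which is why the proof may safely be omitted as routine.
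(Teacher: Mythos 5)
Your proposal is correct and is essentially the argument the paper intends: the paper omits this proof, stating only that it is ``similar to the proof of Theorem~\ref{thm:multicover}'', and your write-up is precisely that adaptation --- the same leaf/internal/root case split via $N_r[l]\subseteq N_r[v]$, with the binary inequalities $2^{\lceil r/2\rceil-1}+2^{\lceil r/2\rceil-2}\ge r$ and $2^{r-2}\ge r$ replaced by $k^{\lceil r/2\rceil-1}\ge r$ (patched at $r\in\{2,4\}$ by the level-$(h-2)$ tokens) and $k^{r-2}\ge r$. Your identification of the genuinely delicate point --- that any level $\ge 1$ congruent to $h-2\pmod 3$ is automatically $\ge 1+t$, so the token level located in $\{L+r-2,L+r-1,L+r\}$ really lies in the patterned range --- is exactly the residue check that makes condition~(3) of the construction work.
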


Computing the size of $S$ in Theorem~\ref{thm:karymulticover}, we obtain the immediate bound.

\begin{cor}\label{mckarybound}
    For $T_h^k$, $h=3m+t \geq 4$, $0 \leq t \leq 2$, we have
    \[
    M_c(T_h^k) \leq k^{t+1} \frac{(k^3)^{m-1 }-1}{k^3-1} + 2\cdot k^{h-2} + (k-1) \cdot k^{h-2} + b_t
    \]
where $b_0 = 0, b_1 = 1, b_2 = 1$.
\end{cor}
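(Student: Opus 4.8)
The plan is to read off $|S|$ directly from the construction in Theorem~\ref{thm:karymulticover} and simplify, since that theorem already guarantees $S$ is a feasible multicover, whence $M_c(T_h^k) \le |S|$. Recall that in a perfect $k$-ary tree the level at distance $i$ from the root contains exactly $k^{i}$ vertices. I would therefore split $S$ into the three parts of its definition and count each contribution separately.

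First, the single level $h-1$ contributes $k^{h-1}$ vertices. Second, the levels $h-2, h-5, \dots, 1+t$ form an arithmetic progression of exponents with common difference $3$; writing $h = 3m+t$ shows this progression runs from $1+t$ up to $h-2 = 3m+t-2$ in $m$ steps, so these levels contribute
\[
\sum_{j=0}^{m-1} k^{\,3j + t + 1} = k^{t+1}\,\frac{(k^3)^{m}-1}{k^3-1}
\]
vertices. Third, the contribution of part~(3) is recorded as $b_t$: it is $0$ when $t=0$ (nothing is added at the top), the single root when $t=1$, and the $k$ vertices at level $1$ when $t=2$.

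To match the exact form displayed in the statement, I would peel the top summand off the geometric series: the $j=m-1$ term equals $k^{t+1}(k^3)^{m-1} = k^{h-2}$, which is precisely the level-$(h-2)$ count, and combining it with the level-$(h-1)$ count gives $k^{h-1}+k^{h-2} = (k+1)k^{h-2} = 2k^{h-2}+(k-1)k^{h-2}$. What remains of the sum, covering levels $1+t, \dots, h-5$, is then $k^{t+1}\frac{(k^3)^{m-1}-1}{k^3-1}$, matching the leading term. Adding the three pieces yields the stated bound. There is no real obstacle here—the whole argument is geometric-series bookkeeping—so the only point demanding attention is treating the residue classes $t\in\{0,1,2\}$ separately; in particular one should keep in mind that in the $t=2$ case part~(3) supplies all $k$ of the level-$1$ vertices when tallying $b_t$.
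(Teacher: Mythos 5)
Your approach is exactly the paper's: the corollary is presented there as an immediate count of the set $S$ from Theorem~\ref{thm:karymulticover}, and your level-by-level tally together with peeling off the top term of the geometric series is the right bookkeeping. Parts (1) and (2) check out: level $h-1$ contributes $k^{h-1}$, the levels $1+t, 4+t, \dots, h-2$ contribute $k^{t+1}\frac{(k^3)^{m}-1}{k^3-1}$, and splitting off the $j=m-1$ summand and combining it with $k^{h-1}$ gives $k^{t+1}\frac{(k^3)^{m-1}-1}{k^3-1} + (k+1)k^{h-2}$, which is the displayed expression minus $b_t$.

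The gap is in the final step, in the case $t=2$. You correctly record that part (3) then contributes the $k$ vertices at level $1$, but you go on to assert that adding the three pieces ``yields the stated bound.'' It does not: your own tally gives $b_2 = k$, whereas the corollary claims $b_2 = 1$, and since $k \geq 3$ your count is strictly larger, so what you have actually proved is the weaker inequality $M_c(T_h^k) \leq k^{t+1}\frac{(k^3)^{m-1}-1}{k^3-1} + (k+1)k^{h-2} + k$, not the stated one. This needs to be confronted rather than passed over: either the corollary's $b_2$ is a typo for $k$ (which would be consistent with the binary analogue, Proposition~\ref{prop:mcthupperbnd}, where $b_2 = 2 = k$), or the construction of $S$ would have to be modified near the root in the $t=2$ case, in which case feasibility as a multicover must be rechecked. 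The cases $t=0$ and $t=1$ are fine, since part (3) contributes $0$ and $1$ respectively, matching $b_0=0$ and $b_1=1$.
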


Next we define a packing broadcast for $T_h^k$ with the same weight as the upper bound for $M_c(T_h^k)$ in Corollary~\ref{mckarybound}. By duality of linear program this proves the optimality of both the multicover and the packing.

\begin{thm}
For $T_h^k$, $h=3m+t \geq 4$, $0 \leq t \leq 2$, we have
    \[
    P_b(T_h^k) = M_c(T_h^k) = k^{t+1} \frac{(k^3)^{m-1 }-1}{k^3-1} + 2\cdot k^{h-2} + (k-1) \cdot k^{h-2} + b_t
    \]
where $b_0 = 0, b_1 = 1, b_2 = 1$.
\end{thm}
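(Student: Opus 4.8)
The plan is to mirror the proof of the binary case exactly. Since Corollary~\ref{mckarybound} already supplies the bound $M_c(T_h^k) \le W$, where $W$ denotes the right-hand side of the claimed formula, and since $P_b(T_h^k) \le M_c(T_h^k)$ by weak duality of the programs~\eqref{eqn:Pb} and~\eqref{eqn:Mc}, it suffices to exhibit a single feasible packing broadcast $f$ of weight exactly $W$. Then $W \le P_b(T_h^k) \le M_c(T_h^k) \le W$ forces equality throughout, proving both halves of the statement simultaneously. So the entire task reduces to constructing one good packing.

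First I would handle the levels near the leaves. Each vertex $v$ at height $h-2$ roots a copy of $T_2^k$, whose $k^2$ leaves split into $k$ sibling-groups of size $k$. In one group I set a single leaf to broadcast with power $2$, and in each of the remaining $k-1$ groups I set a single leaf to broadcast with power $1$, leaving all other leaves silent. The power-$2$ ball is precisely $N_2[l] = \{l\}\cup\{\text{parent of }l\}\cup\{\text{the }k-1\text{ group-siblings of }l\}\cup\{v\}$, and each power-$1$ ball covers only a leaf together with its parent; a direct check shows these $k$ balls are pairwise disjoint and reach upward no farther than $v$ on height $h-2$. This contributes weight $2+(k-1)=k+1$ per height-$(h-2)$ vertex, hence $2k^{h-2}+(k-1)k^{h-2}$ in total, matching the two middle terms of $W$. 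Next I would fill the intermediate levels: at each height $i$ in the arithmetic progression $t+2,\,t+5,\,\dots,\,h-4$ (step $3$, so $m-1$ heights, possibly none when $m=1$) I broadcast with power $1$ from exactly one child of every height-$(i-1)$ vertex, the natural $k$-ary analogue of ``broadcasting at half the vertices''. This gives $k^{i-1}$ unit balls at height $i$; choosing one broadcaster per parent keeps each shared parent in a single ball, and summing $\sum_i k^{i-1}$ over the progression telescopes to $k^{t+1}\frac{(k^3)^{m-1}-1}{k^3-1}$, the remaining principal term of $W$. Finally I broadcast with power $1$ at the root when $t\in\{1,2\}$ and place nothing extra when $t=0$; since the root ball occupies only heights $0,1$ while the lowest intermediate band sits at height $t+2 \ge 2$, these are disjoint, and the contribution is exactly $b_t$.

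The weight bookkeeping is then routine and reproduces $W$ term by term. The genuine content of the proof, and the step I expect to be the main obstacle, is the packing verification, i.e.\ showing all the chosen balls are pairwise disjoint; this is precisely where the $k$-ary case departs from the binary one. The key points are (i) that broadcasting a single child per parent at each intermediate height forces distinct broadcasters at the same level to have disjoint power-$1$ balls, since two such balls can only meet at a common parent, and (ii) that the step-$3$ spacing confines consecutive broadcast levels to the disjoint height-bands $\{i-1,i,i+1\}$ and $\{i+2,i+3,i+4\}$. The one delicate interface is between the bottom construction, whose power-$2$ ball reaches $v$ on height $h-2$, and the highest intermediate band at height $h-4$, which reaches up only to height $h-3$; a short distance computation (the parent of $v$ is at distance $3$ from the power-$2$ leaf, hence outside its ball) confirms these bands do not overlap. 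As the bottom and intermediate constructions are uniform in $t$ and the root correction is a single power-$1$ ball in an otherwise empty region, the verification is a finite, entirely parallel analogue of the binary argument, which is why the omission in the $k$-ary multicover theorem is harmless here.
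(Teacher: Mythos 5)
Your proof takes essentially the same route as the paper: cite Corollary~\ref{mckarybound} for the upper bound, construct a packing broadcast of matching weight (one power-$2$ leaf and $k-1$ power-$1$ leaves under each vertex at depth $h-2$, one power-$1$ broadcaster per parent at depths $h-4, h-7, \dots, t+2$, plus a small correction at the root), and conclude by weak duality of \eqref{eqn:Pb} and \eqref{eqn:Mc}. The only divergence is at the root in the case $t=2$, where you broadcast with power $1$ (contributing exactly $b_2=1$, consistent with the stated formula) while the paper broadcasts with power $p_2=2$; your choice is the one whose weight actually matches the claimed value.
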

\begin{proof}
We define a broadcast $f$ as follows. First, we examine the lowest levels. Let $v$ be a vertex at height $h-2$.  Let $l_1, l_2, \dots, l_{k^2}$ be the $k^2$ leaves in the subtree rooted at $v$ with $l_1, l_2, \dots, l_k$ sharing a parent, $l_{k+1}, l_{k+2} , \dots, l_{2k}$ sharing a parent, $\dots$,  and $l_{k^2 - k +1}, \dots, l_{k^2}$ sharing a parent. Broadcast with power $2$ at $l_1$ and power $1$ at $l_{k+1}, l_{2k+1}, \dots,$ $l_{k^2 - k +1}$.  All other leaves in this subtree do not broadcast.

Consider the vertices at height $i$ and label them $v_1, v_2, \dots, v_{k^i}$ so that $v_{(j-1) k+1}, \dots, v_{kj}$ have a common parent.  To \emph{broadcast at $\frac{1}{k}^{th}$ of the vertices at height $i$} means broadcast at vertices $v_1, v_{k+1}, \dots, v_{k^i-k+1}$ with power $1$. Let $f$ broadcast at $\frac{1}{k}^{th}$ of the vertices at height $h-4, h-7, \dots, h_t$ where $h_0 = 2, h_1 = 3,$ and $h_2 = 4$.  Finally broadcast with power $p_t$ at the root where $p_0 = 0, p_1 = 1,$ and $p_2 = 2$.

It is straightforward to verify $f$ is a packing broadcast, and that the weight of $f$ matches the value given in the theorem statement.

    We have given both a multicover and a packing broadcast with the same total cost. The result follows. 
\end{proof}

\subsection{Spiders and caterpillars}\label{sec:mcspidercat}

In this section, we study spiders, as defined in Section \ref{sec:spiders}. Recall a spider is denoted by $S(d_1, \dots, d_k)$, with a branch point denoted by $u$. Note that if at most two of the $d_i$ are greater than $1$, then $S(d_1, \dots, d_k)$ is a caterpillar.

\begin{thm}
    Let $T = S(d_1, \dots, d_k)$ be a spider, that is not a caterpillar. Let $S \subseteq  V(T)$ consist of all internal vertices of T. Then S is a multicover of $T$.
    \begin{proof}
        Consider $v \in V(T)$ and any ball $N_r[v]$, where $r \leq \mathrm{ecc}(v)$. Let $l_i$ be a leaf such that $d(l_i,v) = \mathrm{ecc}(v)$. Let $w$ be the vertex at distance $r$ on the unique $vl_i-$path. If at least one of $v$ or $w$ is not a leaf, there are at least $r$ internal vertices of $T$ on the $vw-$path, hence at least $r$ tokens within $N_r[v]$. If both $v$ and $w$ are both leaves, then $u$ is on the $vw-$path. The $vw-$path contains $r-1$ internal vertices of $T$. Since $d_i \geq 2$ for at least three values of $i$, there is a token on a vertex adjacent to $u$ not on the $vw-$path. Therefore, $N_r[v]$ has at least $r$ tokens. Therefore, $S$ is a multicover. It is easy to verify 
        \[ 
            \vert S \vert = \sum_{m=1}^{k} (d_m-1) + 1. 
        \] 
    \end{proof}
\end{thm}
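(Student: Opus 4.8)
The plan is to verify the multicover constraints of Eq.~\eqref{eqn:Mc} directly: for every vertex $v$ and every radius $r$ with $1 \leq r \leq \mathrm{ecc}(v)$, I must show that the ball $N_r[v]$ contains at least $r$ tokens, where tokens occupy exactly the internal vertices of $T$. The structural fact I would lean on throughout is that in a spider every leaf sits at the end of a branch, so along any path the only vertices that can fail to be internal are its two endpoints; every strictly interior vertex of a path is internal.

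First I would fix $v$ and $r$, choose a leaf $l_i$ realizing the eccentricity so that $d(v,l_i) = \mathrm{ecc}(v) \geq r$, and let $w$ be the unique vertex at distance $r$ from $v$ on the $v$--$l_i$ path. That path then has $r+1$ vertices, and I would split into cases according to how many of its endpoints $v, w$ are leaves. If at most one endpoint is a leaf, then at least $r$ of these $r+1$ vertices are internal, so $N_r[v]$ already carries at least $r$ tokens and the constraint holds with room to spare.

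The remaining and genuinely delicate case is when both $v$ and $w$ are leaves. Then they lie in two distinct branches, the $v$--$w$ path runs through the branch vertex $u$, and exactly $r-1$ of its vertices are internal, leaving me one token short. This is precisely where the non-caterpillar hypothesis enters: since at least three of the $d_m$ are at least $2$, at least one such long branch, say branch $c$, is distinct from the two branches meeting the $v$--$w$ path. Its neighbour $x_c$ of $u$ is then internal (as $d_c \geq 2$), lies off the path, and satisfies $d(v,x_c) = d(v,u)+1 \leq r$, so it supplies the missing $r$th token. I expect this endpoint-leaf case to be the main obstacle, and it is the \emph{only} place the hypothesis that $T$ is not a caterpillar is used; in a caterpillar one cannot guarantee a third long branch, and the argument genuinely breaks.

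Finally, having established that $S$ is a multicover, I would compute $\lvert S\rvert$ by counting internal vertices branch by branch: each branch of length $d_m$ contributes $d_m - 1$ internal vertices, and the branch vertex $u$ contributes one more, giving $\sum_{m=1}^{k}(d_m - 1) + 1$.
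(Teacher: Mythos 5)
Your proposal is correct and follows essentially the same route as the paper: pick a leaf realizing the eccentricity, take the vertex $w$ at distance $r$ toward it, observe that only the endpoints of the $v$--$w$ path can be leaves, and in the two-leaf case use the non-caterpillar hypothesis to find an internal neighbour of the branch vertex $u$ off the path supplying the $r$th token. Your version even spells out the distance check $d(v,x_c)=d(v,u)+1\leq r$, which the paper leaves implicit.
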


\begin{cor}
Let $T = S(d_1, \dots, d_k)$ be a spider, that is not a caterpillar. Then 
\[ 
    P_b(T) = M_c(T) = \sum_{m=1}^{k} (d_m-1) + 1. 
\]
    \begin{proof}
        Consider the broadcast $f$ such that $f(l_1) = d_1$, $f(l_i) = d_i-1$ for all $2 \leq i \leq k$ and $f(v) = 0$ for all other vertices of $T$. It is easy to verify that $f$ is a packing broadcast with cost         
        $\sum_{m=1}^{k} (d_m-1) + 1$. The result follows.
    \end{proof}
\end{cor}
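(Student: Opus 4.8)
The plan is to exploit the packing–multicover duality set up earlier: since $T$ is a tree, hence strongly chordal, the discussion following Eq.~\eqref{eqn:Mc} gives $P_b(T)=M_c(T)$, and in particular weak duality yields $P_b(T)\le M_c(T)$. The preceding theorem already exhibits a feasible multicover — the set of all internal vertices — of weight $\sum_{m=1}^{k}(d_m-1)+1$, so $M_c(T)\le \sum_{m=1}^{k}(d_m-1)+1$. Thus it suffices to produce a \emph{packing} broadcast of exactly this weight; its existence forces $P_b(T)\ge \sum_{m=1}^{k}(d_m-1)+1$, and the two bounds pinch all three quantities together.

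First I would write down the candidate broadcast $f$ with $f(l_1)=d_1$, $f(l_i)=d_i-1$ for $2\le i\le k$, and $f(v)=0$ for every other vertex $v$. Computing the weight is immediate: $d_1+\sum_{i=2}^{k}(d_i-1)=\sum_{m=1}^{k}d_m-(k-1)=\sum_{m=1}^{k}(d_m-1)+1$, matching the multicover bound exactly.

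The substance is verifying that $f$ is a packing, i.e.\ that no vertex hears two broadcasting leaves. Here the key geometric fact is that the ball $N_{d_1}[l_1]$ reaches the branch vertex $u$ precisely (since $d(l_1,u)=d_1$) but no farther, while for each $i\ge 2$ the ball $N_{d_i-1}[l_i]$ stops one step short of $u$ (since $d(l_i,u)=d_i$). Consequently $u$ is heard only by $l_1$, and any internal vertex at distance $a\ge 1$ from $u$ on branch $i$ lies at distance $a+d_j>d_j-1$ (and, for $j=1$, at distance $a+d_1>d_1$) from every other leaf $l_j$, so it hears only the leaf on its own branch. I would organise this as a short case analysis on which branch a vertex lies, treating $u$ separately, to confirm $|H(w)|\le 1$ for all $w$.

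The main obstacle is really just this packing verification, and the single delicate point within it is the asymmetric choice $f(l_1)=d_1$ versus $f(l_i)=d_i-1$: exactly one leaf may be allowed to reach $u$, since giving two leaves their full distance would have $u$ hear both and destroy the packing property. I would also note that the non-caterpillar hypothesis is what the preceding theorem requires (three branches of length $\ge 2$) and plays no further role in the packing construction itself. With the packing weight equal to the multicover weight, optimality of both follows at once from weak duality, completing the proof.
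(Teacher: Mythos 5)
Your proposal is correct and follows essentially the same route as the paper: exhibit the broadcast $f(l_1)=d_1$, $f(l_i)=d_i-1$ for $i\ge 2$, check it is a packing of weight $\sum_{m=1}^{k}(d_m-1)+1$, and pinch against the multicover of all internal vertices from the preceding theorem via weak duality. The only difference is that you spell out the packing verification (each vertex hears exactly the leaf on its own branch, with $u$ heard only by $l_1$), which the paper leaves as ``easy to verify.''
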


\begin{thm}
    Let $T$ be a caterpillar. Then $P_b(T)=\mathrm{diam}(T)$.
    \begin{proof}
        In~\cite{Dunbar2006}, the authors state that $P_b(T) \geq \mathrm{diam}(T)$. We show $P_b(T) \leq \mathrm{diam}(T)$. Let $P$ be a diametrical path in T, and let $v$ be an endpoint of P. Let $S = V(P) - \{ v\}$. So $|S| = \mathrm{diam}(T)$. It is easy to verify $S$ is a multicover of $T$. Therefore, we have $P_b(T) \leq M_C(T) \leq \mathrm{diam}(T)$, as required.
    \end{proof}
\end{thm}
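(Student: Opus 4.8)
The plan is to sandwich $P_b(T)$ between $\mathrm{diam}(T)$ from below and $M_c(T)$ from above, exploiting the weak duality $P_b(G) \le M_c(G)$ recorded just after Eq.~\eqref{eqn:Mc}. Thus it suffices to exhibit (i) a packing broadcast of weight $\mathrm{diam}(T)$ and (ii) a multicover of weight $\mathrm{diam}(T)$; the two bounds then force $P_b(T) = M_c(T) = \mathrm{diam}(T)$.

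For the lower bound I would fix a diametrical path $P = p_0 p_1 \cdots p_d$ with $d = \mathrm{diam}(T)$ and broadcast from a single endpoint: set $f(p_0) = \mathrm{ecc}(p_0) = d$ and $f(x) = 0$ otherwise. Since only one vertex broadcasts, every vertex hears at most that one vertex, so $f$ is automatically a packing of weight $d$. This reproduces the inequality $P_b(T) \ge \mathrm{diam}(T)$ attributed to~\cite{Dunbar2006}, and it applies to any tree, not just caterpillars.

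The substance is the upper bound. I would reuse the same $P$ and place a token on every path vertex except one endpoint, i.e.\ take $S = V(P) \setminus \{p_0\} = \{p_1, \dots, p_d\}$, so that $\lvert S \rvert = d$. It then remains to check the multicover constraint of Eq.~\eqref{eqn:Mc}: for every $w \in V(T)$ and every $r$ with $1 \le r \le \mathrm{ecc}(w)$, the ball $N_r[w]$ must contain at least $r$ tokens. Here I would invoke the caterpillar structure, namely that $P$ runs from a leaf through the \emph{entire} spine to a leaf, so that every vertex $w$ is either a path vertex $p_i$ or a leaf adjacent to a spine vertex $p_i$ with $1 \le i \le d-1$. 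In either case distances to $P$ are additive, $d(w, p_j) = d(w, p_i) + \lvert i - j \rvert$, so $N_r[w] \cap V(P)$ is a contiguous block $\{p_a, \dots, p_b\}$ centred near $p_i$; counting the indices of this block that survive in $S$ (those that are at least $1$) yields the required $r$ tokens.

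The main obstacle is the boundary bookkeeping in this verification. The delicate cases are those in which the block $\{p_a, \dots, p_b\}$ abuts the deleted endpoint $p_0$ (so one potential token is lost) or reaches the far endpoint $p_d$. I would need the hypothesis $r \le \mathrm{ecc}(w)$ to guarantee that whenever the block loses the token at $p_0$, it extends far enough toward $p_d$ to compensate, keeping the count at exactly $r$ in the tight cases (for instance $w = p_0$ with $r = d$, where $N_r[w] = V(T)$ contains precisely the $d$ tokens of $S$). The one structural fact I would establish before the counting is that $\mathrm{ecc}(w) = \max\{\,d(w,p_0),\, d(w,p_d)\,\}$ for every vertex $w$ of a caterpillar, i.e.\ that pendant leaves never produce a longer eccentricity than the two ends of $P$; once this is in hand, the case analysis is routine and confirms that $S$ is a multicover of weight $\mathrm{diam}(T)$, completing the argument.
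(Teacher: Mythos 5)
Your proposal is correct and follows essentially the same route as the paper: the lower bound via a single broadcast from an endpoint of a diametrical path (which the paper simply cites from the literature), and the upper bound via weak duality using exactly the same multicover $S = V(P)\setminus\{v\}$ of size $\mathrm{diam}(T)$. You merely spell out the verification that the paper dismisses as ``easy to verify,'' and your sketch of that verification (every vertex of a caterpillar is on $P$ or adjacent to an internal vertex of $P$, plus the fact that eccentricities in a tree are attained at the ends of a diametrical path) is sound.
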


\subsection{Double spiders}\label{sec:mcdoublespiders}

Let $S_1 = S(a_1, \dots, a_k)$ and $S_2 = S(b_1, \dots, b_l)$ be spiders with branch vertices $v_1$ and $v_2$ respectively. Let $d$ be a positive integer. We define a \emph{double spider} $DS = (S_1, S_2, d)$ to be the graph obtained by joining disjoint copies of $S_1$ and $S_2$ with a path of length $d$ between $v_1$ and $v_2$. (For this section we relax the definition of spider to allow $k=2$ or $l=2$ so that the degree $v_1$ and $v_2$ are at least $3$ in the double spider.) Figure \ref{fig:great} shows the double spider $DS = (S_1, S_2, 5)$, where $S_1 = (1,2,2)$ and $S_2 = (3,3,3)$. Note that double spiders are trees, thus internal vertices and leaves are well-defined for double spiders. The set of leaves whose branch vertex is $v_i$ is denoted $L(v_i)$, for $i = 1,2$. Define $l_1$, respectively $l_2$, be the leaf at maximum distance from its branch vertex $v_1$, respectively $v_2$.

Let $T$ be a double spider with branch vertices $v_1$ and $v_2$. We will show that the following algorithm creates a multicover of $T$.  We first define several variables. Let $m_1 = d(l_1,v_1)$, $m_2 = d(l_2,v_2)$, $t_1 = \sum_{l \in L(v_1)} d(l,v_1) - \vert L(v_1) \vert +1$ and $t_2 = \sum_{l \in L(v_2)} d(l,v_2) - \vert L(v_2) \vert +1$. Assume with loss of generality, that $t_1 - m_1 \leq t_2 - m_2$. Label the set of vertices along the $v_1v_2$-path as $v_1,p_1,p_2, \dots, p_{d-1},v_2$. 

\begin{enumerate}
  \item Put a token on $v_1$ and $v_2$. For each $l \in L(v_1)$, put a token on the internal vertices of the unique $lv_1$-path.  Repeat the process for each $l \in L(v_2)$.
  \item Place tokens on $p_2, p_4, \dots p_{d-2} $ or $p_{d-3}$ if $d$ is even or odd, respectively.
  \item If $2(t_1 - m_1) > d-1$, we are done. If $2(t_1 - m_1) = d-1$, place a token on $p_{2(t_1 - m_1)}$ and then we are done. Otherwise $2(t_1-m_1) < d-1$, place tokens on $p_{2(t_1 - m_1)+1}, p_{2(t_1 - m_1)+3}, \dots, p_{d-2}$ or $p_{d-1}$ if $d$ is odd or even respectively.
\end{enumerate}

\begin{figure}[hb]
    \begin{center}
    \begin{tikzpicture}[scale=0.9]
        \node[vertex,label={180:$7$}] (v0) at (0,0){};    
        \node[vertex] (v1) at (1,0){};
        \node[vertex] (v2) at (2,0){};         
        \node[vertex] (v3) at (3,0){};
        \node[vertex] (v4) at (4,0){};
        \node[vertex] (v5) at (5,0){};        
        \node[vertex] (v6) at (6,0){};
        \node[vertex] (v7) at (7,0){};
        \node[vertex] (v8) at (8,0){};
        \node[vertex] (v9) at (9,0){};
        \node[vertex, label={360:$2$}] (v10) at (10,0){};
        \node[vertex] (v11) at (1,0.5){};
        \node[vertex] (v12) at (0,1){};
        \node[vertex] (v13) at (1,-0.5){};
        \node[vertex] (v14) at (8,0.5){};
        \node[vertex] (v15) at (9,1){};
        \node[vertex, label={360:$2$}] (v16) at (10,1.5){};
        \node[vertex] (v17) at (8,-0.5){};
        \node[vertex] (v18) at (9,-1){};
        \node[vertex, label={360:$2$}] (v19) at (10,-1.5){};
    
        \node[token] (t1) at (1,0){};
        \node[token] (t2) at (2,0){};         
        \node[token] (t4) at (4,0){};
        \node[token] (t5) at (5,0){};        
        \node[token] (t6) at (6,0){};
        \node[token] (t7) at (7,0){};
        \node[token] (t8) at (8,0){};
        \node[token] (t9) at (9,0){};
        \node[token] (t11) at (1,0.5){};
        \node[token] (t14) at (8,0.5){};
        \node[token] (t15) at (9,1){};
        \node[token] (t17) at (8,-0.5){};
        \node[token] (t18) at (9,-1){};

        \draw[thick] (v0)--(v1)--(v2)--(v3)--(v4)--(v5)--(v6)--(v7)--(v8)--(v9)--(v10);
        \draw[thick](v2)--(v11)--(v12);
        \draw[thick](v2)--(v13);
        \draw[thick](v7)--(v14)--(v15)--(v16);
        \draw[thick](v7)--(v17)--(v18)--(v19);

    \end{tikzpicture}
    \caption{A packing broadcast and multicover of a double spider where $2(t_1 - m_1) < d-1$.
    The multipacking is denoted by the squares and the broadcast is indicated by the value next to the broadcasting leaves. Here $m_1 = 2, m_2 = 3, t_1 = 3, t_2 = 7$ and $d = 5$.}    \label{fig:less}
\end{center}
\end{figure}
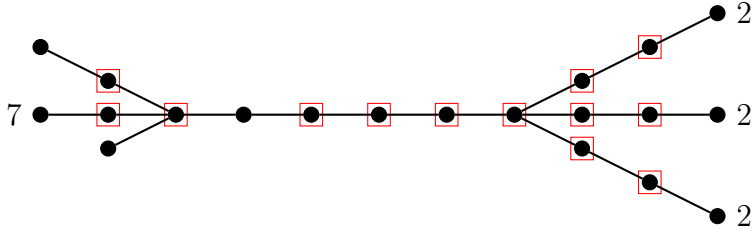

\begin{figure}[ht]
    \begin{center}
    \begin{tikzpicture}[scale=0.9]
        \node[vertex, label={180:$2$}] (v0) at (0,0){};    
        \node[vertex] (v1) at (1,0){};
        \node[vertex] (v2) at (2,0){};         
        \node[vertex] (v3) at (3,0){};
        \node[vertex, label={90:$1$}] (v4) at (4,0){};
        \node[vertex] (v5) at (5,0){};        
        \node[vertex] (v6) at (6,0){};
        \node[vertex] (v7) at (7,0){};
        \node[vertex] (v8) at (8,0){};
        \node[vertex, label={360:$3$}] (v9) at (9,0){};
        \node[vertex] (v10) at (1,0.5){};
        \node[vertex, label={180:$1$}] (v11) at (0,1){};
        \node[vertex] (v12) at (1,-0.5){};
        \node[vertex, label={180:$1$}] (v13) at (0,-1){};
        \node[vertex] (v14) at (7,0.5){};
        \node[vertex] (v15) at (8,1){};
        \node[vertex, label={360:$2$}] (v16) at (9,1.5){};
        \node[vertex] (v17) at (7,-0.5){};
        \node[vertex] (v18) at (8,-1){};
        \node[vertex, label={360:$2$}] (v19) at (9,-1.5){};

        \node[token] (t1) at (1,0){};
        \node[token] (t2) at (2,0){};
        \node[token] (t4) at (4,0){};
        \node[token] (t5) at (6,0){};        
        \node[token] (t6) at (7,0){};
        \node[token] (t7) at (8,0){};
        \node[token] (t10) at (1,0.5){};
        \node[token] (t12) at (1,-0.5){};
        \node[token] (t15) at (7,0.5){};
        \node[token] (t15) at (8,1){};
        \node[token] (t17) at (7,-0.5){};
        \node[token] (t18) at (8,-1){};
        
        \draw[thick] (v0)--(v1)--(v2)--(v3)--(v4)--(v5)--(v6)--(v7)--(v8)--(v9);
        \draw[thick] (v2)--(v10)--(v11);
        \draw[thick] (v2)--(v12)--(v13);
        \draw[thick] (v6)--(v14)--(v15)--(v16);
        \draw[thick] (v6)--(v17)--(v18)--(v19);
        
    \end{tikzpicture}
    \caption{A packing broadcast and multicover of a double spider where $2(t_1 - m_1) > d-1$.  The multipacking is denoted by the squares and the broadcast is indicated by the value next to the broadcasting leaves and $p_2$. Here $m_1 = 2, m_2 = 3, t_1 = 4, t_2 = 7$ and $d = 4$.}    \label{fig:great}
\end{center}
\end{figure}
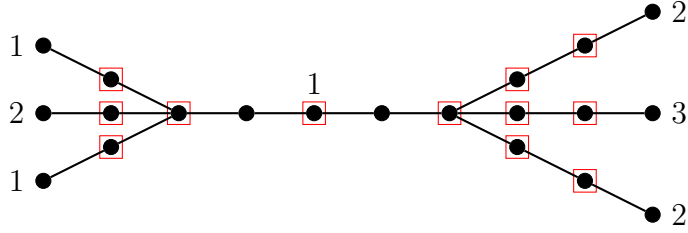

We now give a series of propositions which prove that the above algorithm creates a multicover of $T$. For the following, define $v_1, v_2, l_1, l_2, t_1,$ $t_2, m_1, m_2$ and $p_1, p_2, \dots, p_{d-1}$ as above and assume $t_1 - m_1 \leq t_2 - m_2$. We comment that $t_i$ is the number of tokens on the spider $S_i$, a fact we use below.

\begin{prop}\label{prop:dsleaves}
    For any leaf $l$, $N_r[l]$ contains at least $r$ tokens.
    \begin{proof}
       We claim that we need only show that $N_r[l_1]$ and $N_r[l_2]$ both contain at least $r$ tokens. Let $l$ be a leaf such that $v_1$ is the branch vertex for $l$, and $l \ne l_1$. By the above process, for $1 \leq r \leq d(l_1,l)-1$, $N_r[l]$ contains $r$ tokens. Now consider $d(l_1,l) \leq r \leq \mathrm{ecc}(l)$. We will show $N_r[l_1] \subseteq N_r[l]$. Let $v \in N_r[l_1]$. If $v$ lies on the unique $l_1l$-path, $d(l,v) \leq d(l, l_1) \leq r$. Otherwise, both the unique $l_1v$-path and $lv$-path pass through $v_1$. Then $d(l,v) \leq d(l_1,v) \leq r$. So $N_r[l_1] \subseteq N_r[l]$ as required. The same argument follows for $l_2$. This establishes the claim.
       
       We now show $N_r[l_1]$ has at least $r$ tokens. Let $l_1'$ be the leaf whose branch vertex is $v_1$ and $d(l_1',v_1)$ is maximum over all $l_1' \in L(v_1)$, $l_1' \ne l_1$ (so, $d(l_1',v_1)$ is the length of the second longest branch in $S_1$). Let $m_1' = d(l_1',v_1)$. Consider the following cases:

        \emph{Case $1$: $1\leq r < m_1+m_1'$:} As the $l_1 l_1'$-path contains a token on every vertex excluding $l_1$ and $l_1'$, clearly $N_r[l_1]$ contains $r$ tokens.

        \emph{Case $2$: $m_1 + m_1' \leq r \leq m_1 + 2(t_1-m_1)$:} Since $r \geq m_1 + m_1'$, the ball $N_r[l_1]$ contains all of $S_1$ and thus there are at least $t_1$ tokens within $N_r[l_1]$. By the algorithm, there are at least $\lfloor \frac{r-m_1}{2} \rfloor$ additional tokens along $P$ belonging to $N_r[l_1]$. Since, 
        \[ 
        t_1 +  \left\lfloor \frac{r-m_1}{2} \right\rfloor \geq t_1 + \frac{r-m_1}{2} -\frac{1}{2} 
        = \frac{m_1 + 2(t_1-m_1)}{2} + \frac{r}{2} - \frac{1}{2} \geq r - \frac{1}{2}
        \]
        and the number of tokens is an integer, there are at least $r$ tokens with $N_r[l_1]$. 

        \emph{Case $3$: $m_1 + 2(t_1 - m_1) < r$:} Consider the vertex at distance $r$ from $l_1$, say $u$. Since $m_1 \leq t_1$, the ball $N_r[l_1]$ contains all of $S_1$ and thus contains $t_1$ tokens. 
        
        Let $Q$ be the $l_1u$-path which we label as $l_1, w_1, \dots, v_1, \dots, p_{2(t_1-m_1)},$ $p_{2(t_1-m_1)+1},$ $\dots, x, u$, where $u = p_j$ for some $j \geq 2(t_1-m_1)+1$ or $u \in S_2$. We now count the tokens in $N_r[l_1]$ on this path that are not in $S_1$.  There tokens on every other vertex of $p_1, \dots, p_{2(t_1-m_1)}$, giving $2(t_1-m_1)/2$ tokens. 
        
        There are tokens on each vertex of $p_{2(t_1-m_1)+1}, \dots, u$ if $u \not\in L(v_2)$ or each vertex of $p_{2(t_1-m_1)+1}, \dots, x$ if $u \in L(v_2)$.  Since $T$ is not a caterpillar we cannot have $t_2 - m_2 = t_1 - m_1 = 0$. By assumption $t_2-m_2 \geq t_1 - m_1$, so $t_2 > m_2$.  Thus in the case $u \in L(v_2)$, there is a neighbour of $v_2$ not on $Q$ but containing a token and belonging to $N_r[l_1]$.  The path from $p_{2(t_1-m_1)+1}$ to $x$ has length $r-2(t_1-m_1)-m_1$ and has a token on each vertex.  There is a final token either on $u$ or on a neighbour of $v_2$ (not belonging to $Q$) as described above. 

        In all cases we have $N_r[l_1]$ contains at least
        \[
            t_1 + \frac{2(t_1-m_1)}{2} + r - 2(t_1-m_1) - m_1 \\
            = r
        \]
        tokens as required.

        So for all $r \leq \mathrm{ecc}(l_1)$, $N_r[l_1]$ contains at least $r$ tokens. A similar case analysis shows that for all $r$, $N_r[l_2]$ contains at least $r$ tokens.
    \end{proof}
\end{prop}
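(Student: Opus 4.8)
The plan is to verify the multicover inequality $\lvert N_r[l] \cap S\rvert \ge r$ for every leaf $l$ and every admissible radius $r$, first reducing the whole problem to the two extreme leaves $l_1$ and $l_2$. The key reduction is a ball-containment statement: if $l$ shares its branch vertex $v_1$ with $l_1$ and $r \ge d(l,l_1)$, then $N_r[l_1] \subseteq N_r[l]$. Indeed, a vertex $w$ with $d(l_1,w) \le r$ either lies on the $l_1l$-path, in which case $d(l,w) \le d(l,l_1) \le r$, or else both the $l_1w$- and $lw$-paths pass through $v_1$, in which case $d(l,w) \le d(l_1,w) \le r$ because $l_1$ is the farthest leaf from $v_1$. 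Hence once the bound is known for $l_1$ and $l_2$ it transfers to every other leaf; for the small radii $r < d(l,l_1)$ not covered by the containment, the $r$ vertices at distances $1,\dots,r$ from $l$ along the $ll_1$-path are all internal and already supply $r$ tokens.

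I would then settle $l_1$ by partitioning the radii according to how far $N_r[l_1]$ reaches, writing $m_1'$ for the length of the second-longest branch of $S_1$. In the first regime $r < m_1 + m_1'$, the token-laden $l_1l_1'$-path through $v_1$ exhibits $r$ tokens outright. In the second regime $m_1 + m_1' \le r \le m_1 + 2(t_1-m_1)$, the ball already engulfs all of $S_1$, contributing $t_1$ tokens, and collects $\lfloor (r-m_1)/2\rfloor$ further tokens from the every-other pattern on the near part of the $v_1v_2$-path; the inequality then reduces to $t_1 + \lfloor (r-m_1)/2\rfloor \ge r$, which follows from the case bound on $r$ via $\lfloor x\rfloor \ge x - \tfrac12$ and integrality of the count.

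The substantive case --- and the one I expect to be the main obstacle --- is the third regime $r > m_1 + 2(t_1-m_1)$, where the ball also enters the dense portion of the path and possibly the second spider $S_2$. Here I would trace the geodesic $Q$ from $l_1$ to the vertex $u$ at distance exactly $r$ and tally the tokens off $S_1$ in two stretches: the initial stretch $p_1,\dots,p_{2(t_1-m_1)}$ carries a token on every second vertex, giving $t_1 - m_1$ tokens, while beyond index $2(t_1-m_1)$ the two interleaved placement rules of the algorithm combine to put a token on every vertex. Adding the $t_1$ tokens inside $S_1$ makes the count telescope exactly to $r$. The delicate point is when $u$ is itself a leaf of $S_2$ and so token-free; here I would invoke that $T$ is not a caterpillar together with the normalization $t_1 - m_1 \le t_2 - m_2$ to deduce $t_2 > m_2$, so $S_2$ has a second branch of length at least two, and the internal neighbour of $v_2$ on that branch lies in $N_r[l_1]$ and restores the missing token. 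Finally, an entirely analogous (indeed slightly easier, since $t_2 - m_2 \ge t_1 - m_1$) case analysis handles $l_2$, completing the verification for all leaves.
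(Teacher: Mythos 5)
Your proposal is correct and follows essentially the same route as the paper's proof: the same reduction to $l_1$ and $l_2$ via the ball-containment $N_r[l_1]\subseteq N_r[l]$, the same three radius regimes for $l_1$, the same token count $t_1 + (t_1-m_1) + (r - 2(t_1-m_1) - m_1) = r$ in the third regime, and the same use of the non-caterpillar hypothesis with $t_1-m_1 \le t_2-m_2$ to recover the missing token when $u$ is a leaf of $S_2$.
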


\begin{prop}\label{prop:int1}
    Let $v$ be an internal vertex of $T$. If $N_r[v]$ contains a path of length $2r$, then $N_r[v]$ contains at least $r$ tokens.
    \begin{proof}
        Let $P$ be the path of length $2r$ contained in $N_r[v]$. By the algorithm, there is at most one pair of adjacent vertices $u$ and $w$ which both do not contain a token (this occurs when $d-1$ is odd and $2(t_1-m_1) > d-1$, giving $ u = p_{d-2}$ and $w = p_{d-1}$). If $N_r[v]$ does not contain both $u$ and $w$, then $N_r[v]$ has a token on at least every second vertex of $P$. So there are at least $ \lfloor \frac{2r+1}{2} \rfloor = r$ tokens contained in $N_r[v]$. Now suppose $w,u \in N_r[v]$. Consider the path $P'$ of length $2r-1$ in $N_r[v]$ made by contracting the edge $uw$. Since $u,w$ was the only pair of adjacent vertices which both did not contain a token, $P'$ must contain at least $\lfloor \frac{2r}{2} \rfloor = r$ tokens. Thus $N_r[v]$ contains at least $r$ tokens. 
    \end{proof}
\end{prop}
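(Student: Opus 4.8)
The plan is to reduce the claim to a single counting fact about the token placement, namely that the construction produces \emph{at most one} pair of adjacent vertices that both lack a token. I would first establish this fact, and then run a short independent-set argument on the given path.

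To establish the structural fact I would examine where tokenless vertices can occur. On each spider $S_i$ Step~(1) tokens every internal vertex together with the branch vertex $v_i$, so the only tokenless vertices of $S_i$ are its leaves; each leaf has a single neighbour, which is tokenized, so no tokenless pair arises inside a spider, nor at the junctions $v_1p_1$ and $p_{d-1}v_2$ since $v_1,v_2$ are tokenized. Hence any tokenless pair must sit on the central path $p_1,\dots,p_{d-1}$, and I would analyse it by parity. Step~(2) tokens the even-indexed $p_i$ and, when $2(t_1-m_1)<d-1$, Step~(3) tokens the odd-indexed $p_i$ beyond $p_{2(t_1-m_1)}$; thus every vertex from $p_{2(t_1-m_1)}$ onward carries a token, while before it the tokenless vertices are isolated, sitting on odd indices separated by tokenized even indices. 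A finite check of the end-of-path cut-offs then shows that the only configuration producing two consecutive tokenless $p_i$ is the one in which Step~(3) is vacuous ($2(t_1-m_1)>d-1$) with $d$ odd, which leaves exactly the pair $p_{d-2},p_{d-1}$. This is the promised fact.

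Given it, the proof is immediate. Let $P$ be the path of length $2r$, i.e.\ on $2r+1$ vertices, contained in $N_r[v]$, and let $\{u,w\}$ denote the unique possible tokenless adjacent pair. If $P$ avoids $u$ or avoids $w$, then $P$ contains no two consecutive tokenless vertices, so its tokenless vertices form an independent set of a path on $2r+1$ vertices and hence number at most $r+1$; therefore $P$ carries at least $(2r+1)-(r+1)=r$ tokens, all lying in $N_r[v]$. If instead both $u,w$ lie on $P$, then, since $P$ is an induced path in a tree, the edge $uw$ lies on $P$; contracting it yields a path on $2r$ vertices with no consecutive tokenless vertices and the same token count (both endpoints of the contracted edge were tokenless), whence at least $2r-r=r$ tokens, again inside $N_r[v]$.

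The only delicate point is the structural fact itself: one must verify, across the parities of $d$ and the three branches of Step~(3), that no second adjacent tokenless pair ever appears. This bookkeeping is finite but must be carried out carefully, since the bound of $r$ is tight and would fail the moment a path of length $2r$ could capture two disjoint tokenless pairs; once uniqueness is secured, the independent-set/contraction dichotomy closes the argument with no further case analysis.
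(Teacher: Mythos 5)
Your proof is correct and follows essentially the same route as the paper: identify that the construction leaves at most one adjacent tokenless pair (namely $p_{d-2},p_{d-1}$ when Step~(3) is vacuous), then count tokens on the length-$2r$ path directly in the generic case and via contracting the edge $uw$ in the exceptional case. Your more careful justification of the structural fact is a welcome addition (and your parity condition, $d$ odd, is the accurate one), but the argument itself is the paper's.
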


\begin{prop}\label{prop:int2}
    Let $v$ be an internal vertex of $T$. If $N_r[v]$ does not contain a path of length $2r$, then $N_r[v]$ contains at least $r$ tokens.
    \begin{proof}
        Let $w$ be a vertex at distance $r$ from $v$ and consider a longest path starting at $w$ which contains $v$. Note that the other endpoint of this path must be a leaf, say $l$. We claim $N_r[l] \subseteq N_r[v]$. Let $u \in N_r[l]$.  Either $v$ belongs to the $lu$-path or $v$ belongs to the $uw$-path (otherwise the tree has a $lw$-walk that avoids $v$ which is impossible).  In the former case we have $d(l,v)+d(v,u) = d(l,u) \leq r$ and in the latter we have $d(w,u)= d(w,v)+d(v,u) = r + d(v,u) < 2r$.  In both cases $d(v,u) \leq r$ giving $u \in N_r[v]$.

        So $N_r[l] \subseteq N_r[v]$ as required. Thus by Proposition \ref{prop:dsleaves}, $N_r[v]$ contains at least $r$ tokens.
    \end{proof}
\end{prop}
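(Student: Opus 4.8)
The plan is to reduce the internal-vertex case to the leaf case already handled in Proposition~\ref{prop:dsleaves}. Concretely, I would try to produce a leaf $l$ with $N_r[l] \subseteq N_r[v]$; since $N_r[l]$ already contains at least $r$ tokens, so must $N_r[v]$, and we are done. The intuition behind the hypothesis is that if no path of length $2r$ lies inside $N_r[v]$, then $v$ cannot reach distance $r$ in two independent directions, so $v$ is "lopsided": there is a direction toward a vertex $w$ with $d(v,w)=r$, while on the opposite side of $v$ every branch terminates in a leaf at distance strictly less than $r$.

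First I would fix a vertex $w$ with $d(v,w)=r$; such a $w$ exists because the multicover constraint is only imposed for $r \le \mathrm{ecc}(v)$, and any vertex realizing the eccentricity gives a path from $v$ along which a vertex at distance exactly $r$ occurs. Next I would take a longest path that starts at $w$ and passes through $v$, and let $l$ be its far endpoint. By maximality $l$ is a leaf, and $v$ lies on the $wl$-path at distance $r$ from $w$. I would then argue that the portion of this path from $v$ to $l$ has length strictly less than $r$: otherwise the full $wl$-path would contain a subpath of length $2r$ with $v$ as its midpoint, and this subpath would lie inside $N_r[v]$, contradicting the hypothesis.

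The heart of the argument is the containment $N_r[l]\subseteq N_r[v]$. Given $u \in N_r[l]$, I would use that $T$ is a tree and that $v$ lies on the unique $wl$-path, so the unique $uv$-path meets the $wl$-path on one of the two sides of $v$; equivalently, either $v$ lies on the $lu$-path or $v$ lies on the $wu$-path. In the first case $d(l,v)+d(v,u)=d(l,u)\le r$, so $d(v,u)\le r$ immediately. In the second case $d(w,u)=d(w,v)+d(v,u)=r+d(v,u)$; if this were at least $2r$, then the $wu$-path would again contribute a length-$2r$ subpath inside $N_r[v]$ (equivalently, it would violate the maximality of the chosen $wl$-path), so $d(w,u)<2r$ and hence $d(v,u)<r$. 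Either way $u\in N_r[v]$.

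I expect the main obstacle to be this last containment, specifically the subcase where $u$ lies on the $w$ side of $v$: one must invoke the no-$2r$-path hypothesis (or the maximality of the chosen path) rather than a purely metric inequality to rule out $d(v,u)\ge r$. Once the containment is established, Proposition~\ref{prop:dsleaves} finishes the proof, so the remaining work is only the clean tree-path bookkeeping of deciding which side of $v$ the vertex $u$ lies on.
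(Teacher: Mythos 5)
Your proposal matches the paper's proof essentially step for step: choose $w$ at distance $r$, extend to a longest path through $v$ ending at a leaf $l$, establish $N_r[l]\subseteq N_r[v]$ by the same two-case split on which side of $v$ the vertex $u$ lies, and finish with Proposition~\ref{prop:dsleaves}. Your added remarks (that $d(v,l)<r$ and that the no-length-$2r$-path hypothesis is what rules out $d(w,u)\ge 2r$) just make explicit what the paper leaves implicit.
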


\begin{cor}\label{cor:mcds}
The above algorithm creates a multicover of a double spider $T = DS(S_1, S_2, d)$.
\begin{proof}
    The result follows from Propositions \ref{prop:dsleaves}, \ref{prop:int1} and \ref{prop:int2}.
\end{proof}
\end{cor}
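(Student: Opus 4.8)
The plan is to verify directly that the set $S$ produced by the algorithm satisfies every multicover constraint of Eq.~\eqref{eqn:Mc}; that is, for each vertex $v$ and each $r$ with $1 \leq r \leq \mathrm{ecc}(v)$, the ball $N_r[v]$ carries at least $r$ tokens. The three preceding propositions are tailored to dispatch exactly these constraints once the vertices are organized into the right cases, so the substance of the corollary is merely to check that the cases are exhaustive and then invoke linear-programming duality implicitly through the feasibility of $S$.

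First I would split on whether $v$ is a leaf or an internal vertex. When $v$ is a leaf, Proposition~\ref{prop:dsleaves} gives $N_r[v]$ at least $r$ tokens for every admissible $r$, settling that vertex. When $v$ is internal, I would impose a second dichotomy according to whether $N_r[v]$ contains a path of length $2r$. If it does, Proposition~\ref{prop:int1} applies and yields at least $r$ tokens; if it does not, Proposition~\ref{prop:int2} applies and again yields at least $r$ tokens. Since the ball around an internal vertex either does or does not contain such a path, these two subcases are jointly exhaustive, and together with the leaf case they cover every vertex of $T$.

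Having verified the constraint at each pair $(v,r)$, I would conclude that $S$ is feasible for the multicover program of Eq.~\eqref{eqn:Mc}, i.e.\ $S$ is a multicover of $T$, which is precisely the assertion of the corollary.

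The main obstacle here is not in this assembly, which is routine once the propositions are in hand, but in having framed the internal-vertex dichotomy (a path of length $2r$ present or absent) so that Propositions~\ref{prop:int1} and~\ref{prop:int2} partition all possibilities cleanly. The genuinely delicate reductions — the case analysis and token count for $N_r[l_1]$ in Proposition~\ref{prop:dsleaves}, together with the nesting arguments $N_r[l_1] \subseteq N_r[l]$ for leaves and $N_r[l] \subseteq N_r[v]$ for internal vertices whose ball avoids a long path — already reside inside those propositions, so the corollary itself carries no further difficulty and I expect no unexpected complications.
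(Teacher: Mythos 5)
Your proposal is correct and matches the paper's argument exactly: the paper's proof of the corollary is simply the observation that Propositions~\ref{prop:dsleaves}, \ref{prop:int1}, and \ref{prop:int2} together cover every vertex--radius pair, via the same leaf/internal split and the same dichotomy on whether $N_r[v]$ contains a path of length $2r$. Your version just makes the exhaustiveness of the cases explicit, which the paper leaves implicit.
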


\begin{thm}\label{thm:pbds}
    Let $T=DS(S_1, S_2, d)$ be a double spider with $m_1, t_1, t_2$ defined as above.  Then 
    \[M_c(T) = P_b(T)=
        \begin{dcases}
            m_1+d+t_2-1,  & \text{ if } 2(t_1-m_1) \leq d-1, \\
            t_1+\lfloor {(d-2)}/{2}\rfloor+t_2, & \text{ if } 2(t_1-m_1) \geq d.
        \end{dcases}
    \]
    \begin{proof}
        If $2(t_1-m_1) \leq d-1$, define a broadcast $f$ as 
        \[f(x)=
            \begin{dcases}
                d(l_1,v_2), & \text{ if } x = l_1, \\
                d(x,v_2) - 1, & \text{ if } x \in L(v_2), \\
                0, & \text{ otherwise.}
            \end{dcases}
        \]
        It is easy to verify $f$ is a packing broadcast with the same size as the mutlicover described above.

        If $2(t_1-m_1) \geq d$, define a broadcast $f$ as 
        \[ f(x)=
            \begin{dcases}
                d(l_1,v_1), & \text{ if } x = l_1, \\
                d(l_2,v_2), & \text{ if } x = l_2, \\
                d(x,v_1) - 1, & \text{ if } x \in L(v_1) - \{ l_1\}, \\
                d(x,v_2) - 1, & \text{ if } x \in L(v_2) - \{ l_2\}, \\
                \lfloor {(d-2)}/{2} \rfloor, & \text{ if } x = p_{\lfloor \frac{d}{2} \rfloor}, \\
                0, & \text{ otherwise.}
            \end{dcases}
        \]
        It is easy to verify $f$ is a packing broadcast with the same size as the mutlicover described above.
        The result follows.
    \end{proof}
\end{thm}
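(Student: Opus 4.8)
The plan is to prove both equalities at once by weak linear-programming duality, leaning on the work already done. Corollary~\ref{cor:mcds} guarantees that the set $S$ produced by the algorithm of Section~\ref{sec:mcdoublespiders} is a feasible multicover, so $M_c(T)\le |S|$, and the two programs satisfy $P_b(T)\le M_c(T)$. Hence it suffices to exhibit a feasible packing broadcast $f$ whose weight equals $|S|$: the chain
\[
f(V)\le P_b(T)\le M_c(T)\le |S| = f(V)
\]
then forces equality throughout, so that $f$ and $S$ are simultaneously optimal and the common value is $|S|$. The two branches of the stated formula will emerge from evaluating $|S|$ (equivalently $f(V)$) in the two regimes determined by comparing $2(t_1-m_1)$ with $d$.

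First I would record the size of the algorithm's multicover. Step~(1) places exactly $t_1+t_2$ tokens, since by definition $t_i$ counts one token on every internal vertex of the branches of $S_i$ together with the branch vertex $v_i$; step~(2) places $\lfloor (d-2)/2\rfloor$ tokens on the central path; and step~(3) adds a number of path tokens governed by whether $2(t_1-m_1)$ is greater than, equal to, or less than $d-1$. Carrying out this count while splitting on the parity of $d$ and simplifying the floor expressions gives $|S| = m_1+d+t_2-1$ when $2(t_1-m_1)\le d-1$ and $|S| = t_1+t_2+\lfloor (d-2)/2\rfloor$ when $2(t_1-m_1)\ge d$, matching the theorem.

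Next I would verify that the broadcast $f$ of each case is a packing of the matching weight. The broadcasting vertices are leaves, together with the single path vertex $p_{\lfloor d/2\rfloor}$ in the second case. For any two broadcasting vertices $x,y$ I would compute $d(x,y)$ by routing through the branch vertices (and across the length-$d$ path where relevant) and check that $f(x)+f(y)<d(x,y)$; this makes the balls $N_{f(x)}[x]$ and $N_{f(y)}[y]$ disjoint, so no vertex is heard twice and $f$ is a packing. For the weight, the $L(v_2)$ leaves contribute $\sum_{x\in L(v_2)}\bigl(d(x,v_2)-1\bigr)=t_2-1$ and $l_1$ contributes $d(l_1,v_2)=m_1+d$ in the first case, while in the second case the leaf contributions on each side collapse to $t_1$ and $t_2$ and the central vertex adds $\lfloor (d-2)/2\rfloor$, reproducing the two formulas.

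The step needing the most care is the packing check in the second case, where $p_{\lfloor d/2\rfloor}$ broadcasts. I would show its ball reaches exactly from $p_1$ out to $p_{d-1}$ (for $d$ even) or $p_{d-2}$ (for $d$ odd) but touches neither $v_1$ nor $v_2$, whereas $l_1$ and $l_2$ reach exactly $v_1$ and $v_2$ from their sides; the three families of balls then tile the path with no overlap. This is precisely where the parity of $d$ and the floor functions must be tracked exactly, and where the hypothesis $2(t_1-m_1)\ge d$ (respectively $\le d-1$) ensures that the packing we have written is the one whose weight agrees with the multicover rather than the weaker alternative. Everything else reduces to routine distance inequalities.
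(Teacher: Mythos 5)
Your proposal is correct and follows essentially the same route as the paper: weak LP duality ($P_b \le M_c$) from Section~\ref{packing}, feasibility of the algorithm's multicover from Corollary~\ref{cor:mcds}, and an explicit packing broadcast of matching weight in each of the two regimes, with equality then forced throughout. You simply spell out the verifications (the pairwise check $f(x)+f(y)<d(x,y)$ and the token/weight counts) that the paper dismisses as ``easy to verify.''
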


\section{Further questions}

This work began with a question of Ahmane, Bouchemakh, and Sopena~\cite{Ahmane2018} on finding broadcast independence values for other classes of trees, in particular $k$-ary trees.  The most natural question is what other classes of graphs, or even trees, can we compute the broadcast independence number or broadcast packing number?

\noindent\textbf{Acknowledgements\hspace{0.1in}}We acknowledge the support of
the Natural Sciences and Engineering Research Council of Canada (NSERC), RGPIN-2014-04760 and the USRA program.

\noindent Cette recherche a \'{e}t\'{e} financ\'{e}e par le Conseil de
recherches en sciences naturelles et en g\'{e}nie du Canada (CRSNG), RGPIN-2014-04760 et le programme BRPC.

\begin{center}
\includegraphics[width=2.5cm]{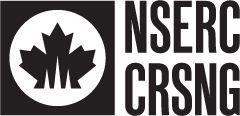}
\end{center}

\bibliography{broadcastlit}{}

\begin{thebibliography}{10}

\bibitem{Ahmane2018}
M.~Ahmane, I.~Bouchemakh, and E.~Sopena.
\newblock On the broadcast independence number of caterpillars.
\newblock {\em Discrete Appl. Math.}, 244:20--35, 2018.

\bibitem{Lobsters2024}
M.~Ahmane, I.~Bouchemakh, and E.~Sopena.
\newblock On the broadcast independence number of locally uniform 2-lobsters.
\newblock {\em Discuss. Math. Graph Theory}, 44(1):199--229, 2024.

\bibitem{Bouchemakh2014}
I.~Bouchemakh and M.~Zemir.
\newblock On the broadcast independence number of grid graph.
\newblock {\em Graphs Combin.}, 30(1):83--100, 2014.

\bibitem{Bouchouika2020}
S.~Bouchouika, I.~Bouchemakh, and E.~Sopena.
\newblock Broadcasts on paths and cycles.
\newblock {\em Discrete Appl. Math.}, 283:375--395, 2020.

\bibitem{Brewster2013}
R.~C. Brewster, C.~M. Mynhardt, and L.~E. Teshima.
\newblock New bounds for the broadcast domination number of a graph.
\newblock {\em Cent. Eur. J. Math.}, 11(7):1334--1343, 2013.

\bibitem{Dunbar2006}
J.~E. Dunbar, D.~J. Erwin, T.~W. Haynes, S.~M. Hedetniemi, and S.~T.
  Hedetniemi.
\newblock Broadcasts in graphs.
\newblock {\em Discrete Appl. Math.}, 154(1):59--75, 2006.

\bibitem{Erwin2001}
D.~J. Erwin.
\newblock {\em Cost domination in graphs}.
\newblock PhD thesis, Wester Michigan University, 2001.

\bibitem{Farber1984}
M.~Farber.
\newblock Domination, independent domination, and duality in strongly chordal
  graphs.
\newblock {\em Discrete Appl. Math.}, 7(2):115--130, 1984.

\bibitem{Circulant2024}
A.~Laouar, I.~Bouchemakh, and E.~Sopena.
\newblock On the broadcast independence number of circulant graphs.
\newblock {\em Discrete Math. Algorithms Appl.}, 16(5):Paper No. 2350053, 2024.

\bibitem{Lubiw1987}
A.~Lubiw.
\newblock Doubly lexical orderings of matrices.
\newblock {\em SIAM J. Comput.}, 16(5):854--879, 1987.

\bibitem{Mynhardt2021}
C.~M. Mynhardt and L.~Neilson.
\newblock Boundary independent broadcasts in graphs.
\newblock {\em J. Combin. Math. Combin. Comput.}, 116:79--100, 2021.

\bibitem{Neilson2019}
L.~Neilson.
\newblock {\em Broadcast Independence in Graphs}.
\newblock PhD thesis, University of Victoria, 2019.

\bibitem{west}
D.~B. West.
\newblock {\em Introduction to graph theory}.
\newblock Prentice Hall, Inc., Upper Saddle River, NJ, 1996.

\end{thebibliography}
\bibliographystyle{abbrv}

\end{document}